\newtheorem{thm}{Theorem}
\newtheorem{prop}[thm]{Proposition}
\newtheorem{lemma}[thm]{Lemma}
\newtheorem{cor}[thm]{Corollary}
\newtheorem{defn}[thm]{Definition}
\newtheorem{rem}[thm]{Remark}
\begin{document}

\title{The Wave Trace Invariants of the Spectrum of the $G$-Invariant Laplacian.
}


\author{M. R. Sandoval\\ Department of Mathematics\\ 
Trinity College\\ Hartford, Connecticut 06106\\
\\
mary.sandoval@trincoll.edu\\
(telephone) 860-297-2062\\
(fax) 860-987-6239}

\maketitle

\begin{abstract}
Given a compact boundaryless Riemannian manifold $Y$ on which a compact Lie group $G$ acts, there is always a metric on $Y$ such that the action is by isometries. Assuming $Y$ is equipped with such a metric, recall that the $G$-invariant Laplacian is the restriction of the ordinary Laplacian to the space of functions which are constant along the orbits of $G$. In this paper, the author analyzes the wave trace of the $G$-invariant Laplacian and shows that the singularities of this wave trace occur at the lengths of certain geodesic arcs--those that are orthogonal to the orbits and whose endpoints are related by the action of $G$. This defines a notion of the length spectrum for the group action. Further, using the deep connection between foliated manifolds and isometric group actions on manifolds, the asymptotics of these singularities are calculated for an arbitrary non-zero number in the length spectrum.
\end{abstract}
Keywords: $G$-manifold, invariant spectrum, foliation, wave equation, basic

\indent Laplacian, basic spectrum.
\section{Introduction}
\label{intro}
Let $Y$ be a compact Riemannian manifold without boundary that admits a (left) action by a compact Lie group $G$ which acts by orientation-preserving isometries.  Let $C^\infty(Y)^G$ denote the class of functions that are constant along the orbits of the $G$-action, and let $\Delta^G$ denote the restriction of the ordinary Laplacian on functions, $\Delta,$ to $C^\infty(Y)^G.$  The $G$-invariant spectrum of $Y$ is the spectrum of $\Delta^G,$ which we denote by $spec(Y,\Delta)^G.$  To avoid the trivial case, we assume that the action is such that there are no dense orbits, and, thus, $C^\infty(Y)^G$ is not the space of constant functions.  

The goal of this paper is to study the wave trace invariants  of the $G$-invariant spectrum in the above situation, and to relate these invariants to the singular space of orbits, $Y/G.$  In particular, we calculate the singularities of the wave trace for the $G$-invariant spectrum, which we show are located at the lengths of certain geodesic arcs which are everywhere orthogonal to the orbits of $G.$  

Formally, the wave trace of the $G$-invariant functions can be represented by the integral 
\begin{equation}\label{e:gwave}
\int_{G\times Y}U(t,x,gx)\,dg\,dx,
\end{equation}
or, more compactly, as the distribution 
\begin{equation}\label{e:distrep}
\Pi_*\Delta^*(\Pi_G)_*U(t,x,y)
\end{equation} where $U(t,x,y)$ is the Schwartz kernel of the ordinary Laplacian, $\Pi: \mathbb{R}\times Y\rightarrow \mathbb{R}$ is the usual projection, $\Delta:\mathbb{R}\times Y\rightarrow \mathbb{R}\times Y\times Y$ is the diagonal embedding, and $\Pi_G:Y\rightarrow Y/G.$  To compute \eqref{e:gwave} would be a straightforward application of the calculus of Fourier integral operators, provided that the canonical relations of the operators in \eqref{e:distrep} intersect cleanly. However, in general, they do not; the obstacle is the canonical relation of $(\Pi_G)_*.$ Let $J$ denote the usual momentum mapping on $T^*Y$ associated to the lifted (left) action of $G$ on $T^*Y$.  The canonical relation of $(\Pi_G)_*$ is the moment lagrangian, \cite{GS}:
\begin{equation}\label{e:cg}
\mathcal{C}_G:=\bigl\{\bigl(x,\xi_x,gx,dL^*_{g^{-1}}(\xi_x)\bigr);\bigl(g,\gamma=J(\xi_x),x,\xi_x,gx,dL^*_{g^{-1}}(\xi_x)\bigr)\bigr\}.
\end{equation}
The intersection of the canonical relation of $\Delta$ and that of $\Pi_G$, the moment Lagrangian, intersect cleanly if and only if the rank of $J$ is constant over $Y$. In this case, $J^{-1}(0)$ is at worst an orbifold.  

The situation considered in this article is the general case--when the rank of the momentum mapping is not constant over the points of $Y$.  In this case the rank of $J$ varies according to the dimension of the orbits, and $J^{-1}(0)$ is a complicated singular object. While some of these complexities can be analyzed by dividing \eqref{e:cg} into components on which the rank of the moment map is constant, and performing a separate analysis on each component, this course of action would not account for hamiltonian curves that do not stay confined to these components.\footnote{A generalization of such an approach is taken in \cite{LS} in the context of another problem--generalizing the process of reduction--rather than the wave-invariants problem.} 

The approach of this paper is to analyze the wave invariants of the orbit space by means of the corresponding analysis of the space of leaf closures of an associated foliation, using the deep connection proved in \cite{Ri2} between the $G$-invariant spectrum of a $G$-manifold and the basic spectrum of a Riemannian foliation equipped with bundle-like metric. Here the the basic spectrum refers to the spectrum of the operator  $\delta_Bd_B$ where $d_B$ is the exterior derivative restricted to functions that are constant along the leaves of the foliation and $\delta_B$ is its adjoint operator.  Such functions are said to be {\it basic} for the foliation, and the operator above is the {\it basic Laplacian} on functions. This approach was used previously in \cite{San4}, where partial results about the $G$-invariant wave trace were shown. In essence, this approach involves resolving the singularities of $Y/G$ by exchanging $Y$ for a higher dimensional manifold built by suspending the action of $G$ on $Y$ on a product of $Y$ with another space which depends on $G.$ This higher dimensional space defined by the suspension is foliated and its basic Laplacian is isospectral to the $G$-invariant Laplacian on $Y$. The wave invariants of the foliated suspension can be realized as quantities associated to $Y$ and $G$.
Thus, this project is an extension of the work done by the author in \cite{San4}, and applies the author's recent results for foliations in \cite{San5}.
The larger theme of the wave traces of the $G$-invariant spectra of $G$-manifolds and the basic spectra of a Riemannian foliation is that they are related examples of spectral problems on singular spaces.  In fact, Theorem 5.1 of K. Richardson of \cite{Ri2} states that, under certain hypotheses, the singular space determined by the orbits of a group acting on a compact manifold by isometries  and the $G$-invariant Laplacian is isospectral to the space of leaf closures of a related Riemannian foliation  defined by a suspension involving the original $G$-manifold and its basic Laplacian.  In this paper, we use this connection between these two singular spaces to study the wave invariants of group actions by using this connection to an associated foliation.  This is accomplished via a construction due to K. Richardson in \cite{Ri2}. It is then possible to use previous results in the study of foliations in \cite{San5} to derive results about the associated group action.

The main result of this paper concerns $G$-manifolds.  Group actions on manifolds have been very widely studied, however, relatively little work appears to have been done on the wave invariants of the $G$-invariant spectrum, aside from \cite{Z}, where S. Zelditch derived spectral results for a Lie group acting by unitary Fourier integral operators on a manifold.  However, the setting of a group $G$ acting on compact manifold $Y$ by isometries has long been studied by many.  In particular, it has been shown that it is always possible to find a metric on the manifold $Y$ such that the action of $G$ is by isometries. More recently, in \cite{AlbMel}, P. Albin and R. Melrose have shown how to resolve a smooth group action in a canonical way using iterated blow-ups for manifolds with corners.


The paper is organized as follows. Section \ref{mainresults} describes the setting, including a singular phase space associated to this problem, a natural stratification of that space, the main results about the length spectrum for the group action, the related hamiltonian dynamics in relation to the stratification of the phase space, and the statements of the asymptotics of the G-invariant wave trace. Section \ref{proof} contains the proofs of the results stated in section \ref{mainresults}. In particular, Section \ref{isospectralproblem} contains details how to translate the $G$-manifold into an isospectral problem involving the basic Laplacian on a related foliated manifold. Section \ref{summary} describes the results for the basic spectrum, and Section \ref{proofs} shows how to translate the foliation results in terms of the $G$-manifold, thus proving the main results.

\section{Main Results}
\label{mainresults}
Recall that the trace of a wave operator is associated with sums of Lagrangian distributions on $\mathbb{R},$ whose singularities are located at the lengths of certain hamiltonian curves in the phase space of the manifold. For this version of the wave trace, the curves in question are those that are the canonical lifts of certain geodesics on $Y$ which are orthogonal to the orbits and whose endpoints are related by the lifted action on a subset of $T^*Y.$ This subset is a kind of singular version of the phase space for the singular orbit space. In view of this, we begin the description of the relevant curves by describing the stratification of $Y,$ which also induces a stratification of the subset of $T^*Y$ that we define as the singular phase space. Then we identify the relevant hamiltonian curves, and define a condition on their endpoints that describes when they are relatively closed with respect to the action. It is the periods of these relatively closed hamiltonian curves that arise in the study of the wave trace of the $G$-invariant Laplacian. We will refer to this set as the {\it singular length spectrum for the orbit space.} Next, we describe the hamiltonian dynamics of these curves with respect to the stratification defined previously. We then state the main results: the Poisson relation and the trace formula.

We assume that the $G$-action on $Y$ produces orbits of variable dimension--the most general case. The action can be thought of as being singular, in that the orbit space, $Y/G,$ is not a smooth manifold. The manifold $Y$ is stratified by the dimension of the orbits. In other words, if $f(x)=$dimension of the orbit of the action through $x$, let $Y_k:=f^{-1}(k)$ for the positive integers $k$ corresponding to the possible orbit dimensions. Then the stratification of $Y$ is defined by
\begin{equation}
Y=\bigcup_{j=k_0}^N Y_k
\end{equation}
where $Y_{k_0}$ is the union of orbits of least dimension, and $Y_N$ is the union of orbits of maximal dimension. It is known that $Y_N$ is open and dense in $Y$, and that the function $f$ defined above is lower semicontinuous on $Y$. Further, if $k_0=0$ then $Y_0$ is a totally geodesic submanifold of $Y,$ and each $Y_k$ for $k>0$ is at worst an orbifold (with possibly singular boundary).

The stratification of $Y$ induces a stratification of a subset of $T^*Y,$ which will be a kind of singular phase space with respect to our singular action, as follows. Let $y\in Y$ and let $\mathcal{O}_y$ be the orbit of $y$ under the $G$-action. We define the set $TG\subset TY$ to be the set 
\begin{equation}
TG=\{v_y\in TY\,|\,v_y\in T_y\mathcal{O}_y\}.
\end{equation}
Note that since $G$ is compact, and thus the orbits of the action are submanifolds, $TG$ is an integrable distribution of variable dimension.
The singular phase space for this problem is the set
\begin{equation}\label{e:tgzero}
(TG)^0=\{\xi_y\in T^*Y\,|\,\xi_y(v_y)=0,\forall v_y\in TG\}.
\end{equation}



We define the stratification of $(TG)^0$ as follows:
\begin{equation}
(TG)^0=\bigcup_{k=0}^N Y^*_k,
\end{equation}
where for $k>0$
\begin{equation}
Y^*_k=(TG)^0\cap(NY_k)^0,
\end{equation}
and for $k=0$, we define $Y^*_0$ to be the complement of the union of the $Y^*_k$ in $(TG)^0:$
\begin{equation}
Y^*_0=(TG)^0\cap\Bigl((NY_0)^0\cup \Bigl( \bigcup_{k=k_0}^N\bigl( (NY_k)^0)\bigr)^c\Bigr)\Bigr).
\end{equation}

Later we will wish to distinguish between the points in $(NY_0)^0$ (if $Y_0$ is non-empty) and its complement in $Y^*_0$; accordingly, let 
\begin{equation}
Y^*_e=\Bigl(\bigcup_{k=k_0}^N\bigl( (NY_k)^0)\bigr)^c\Bigr)\cap (TG)^0.
\end{equation}

Next, consider the hamiltonian flow with respect to this stratification. 
For any $\eta_y\in Y^*_k$ the induced metric on $T^*Y,$ $|\eta_y|_y^2$ splits via the Pythagorean Theorem into 
\begin{equation}\label{e:pythagoras}
|\eta_y|_y^2=\bigl(H_{\mathcal{O}}(\eta_y)\bigr)^2+\bigl(H_{\mathcal{O}^\perp}(\eta_y)\bigr)^2,
\end{equation}
where $\bigl(H_{\mathcal{O}}(\eta_y)\bigr)^2$ is the function induced by the orbit-wise metric, and\\ $\bigl(H_{\mathcal{O}^\perp}(\eta_y)\bigr)^2$ is the part transverse to the orbits.
The singular phase space $(TG)^0=\{H_{\mathcal{O}}(\eta_y)=0\}$.
In light of the above,  we will be interested in the hamiltonian flow through covectors that are conormal to the orbits of the action. Let  $\Phi^t_Y$ denote the hamiltonian flow of the hamiltonian function $|\eta_y|_y$ with associated hamiltonian vector field $\Xi$. These hamiltonian curves have the following behavior with respect to $(TG)^0$ and its stratification.

\begin{prop}\label{t:hamdyn}
The flow $\Phi^t_Y$ restricts to $(TG)^0:$ if $\eta_y\in (TG)^0$, then 
\begin{equation}\label{e:flowrestrictsgroup}
\{\Phi^t_Y(\eta_y)\,|\,t\in\mathbb{R}\}\subset (TG)^0.
\end{equation}
Furthermore, if $k>0$ and $\Phi^t_Y(\xi_y)$ is tangent to $Y^*_k$ at $\eta_y$ then either 
\begin{equation}\label{e:groupsingstrat1}
\{\Phi^t_Y(\eta_y)\,|\,t\in\mathbb{R}\}\cap Y_j^*=\emptyset \quad \forall j\not=k
\end{equation}
or
\begin{equation}\label{e:groupsingstrat2}
\{\Phi^t_Y(\eta_y)\,|\,t\in\mathbb{R}\}\cap  Y_e^*\not=\emptyset \text{ and is a finite set.}
\end{equation}
If $k=0$ and the action has non-isolated fixed points, and if $\Phi^t_Y(\eta_y)$ is tangent to $(NY_0)^0$ at $\eta_y$ then 
\begin{equation}\label{e:groupsingtrat3}
\{\Phi^t_Y(\eta_y)\,|\,t\in\mathbb{R}\}\subset (NY_0)^0\cap (TG)^0.
\end{equation}
Otherwise, $\Phi^t_Y(\eta_y)$ is not tangent to $Y^*_e$. 
\end{prop}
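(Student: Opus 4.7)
First I would identify $(TG)^0$ with the zero level set of the moment map $J\colon T^*Y\to\mathfrak{g}^*$ of the lifted $G$-action. By construction, $\xi_y$ lies in $(TG)^0$ iff it annihilates every Killing generator $X^\#_y$ for $X\in\mathfrak{g}$, which is exactly $\langle J(\xi_y),X\rangle=0$. Since $H(\eta_y)=|\eta_y|_y$ is $G$-invariant, each component of $J$ Poisson-commutes with $H$, so $J$ is conserved along $\Phi^t_Y$ and $(TG)^0=J^{-1}(0)$ is flow-invariant, giving \eqref{e:flowrestrictsgroup}.

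For the stratum-wise statements, the main tool is the equivariant slice theorem at a point $y\in Y_k$: a $G$-invariant tubular neighborhood of $\mathcal{O}_y$ is identified with $G\times_{G_y}V$, where $V$ is a linear slice carrying the isotropy representation of $G_y$. Horizontal geodesics at $y$ correspond to rays in $V$ starting at the origin, and the orbit-type stratification of $Y$ near $\mathcal{O}_y$ pulls back to the stratification of $V$ by $G_y$-orbit types. Since $(G_y)_{tv}=(G_y)_v$ for all $t>0$, each such ray lies in a single stratum of $V$ away from $0$; moreover, if its direction lies in the fixed subspace $V^{G_y}$, then $V^{G_y}$ is totally geodesic as the fixed set of an isometric $G_y$-action, so the ray is a genuine geodesic in $V^{G_y}$ corresponding to a geodesic in $Y_k$.

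With this local picture, Part 2 becomes a dichotomy: the hypothesis that $\Phi^t_Y(\xi_y)$ is tangent to $Y^*_k$ at $\eta_y$ places the initial horizontal direction in $V^{G_y}$, so by total geodesicity the geodesic remains in $Y_k$ throughout the slice. Either this persists globally, giving \eqref{e:groupsingstrat1}, or the geodesic eventually exits $Y_k$, at which point the covector ceases to be tangent to any $Y_j$ and enters $Y^*_e$, giving \eqref{e:groupsingstrat2}. Part 3 is the simpler analogue: as noted in the text $Y_0$ is totally geodesic and $G$ fixes it pointwise, so a geodesic tangent to $Y_0$ at $y\in Y_0$ stays in $Y_0$ for all time with all its covectors conormal, yielding \eqref{e:groupsingtrat3}. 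The final clause follows by contrapositive: a trajectory locally contained in $Y^*_e$ would have its geodesic direction, by the slice analysis, confined to a totally geodesic $V^{G_y}$, placing the trajectory tangentially in the corresponding $Y^*_k$ or $(NY_0)^0$ and contradicting the hypothesis.

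The hard part will be the finiteness assertion in \eqref{e:groupsingstrat2}: I must show that the crossings of $Y^*_e$ along the trajectory do not accumulate. I plan to combine the Whitney regularity of the orbit-type stratification with analytic dependence of the geodesic flow on initial data to show each exit from $Y_k$ is a transverse crossing of a smooth stratum boundary, hence isolated, and then invoke compactness of $Y$ together with the relatively-closed nature of the trajectory segments relevant to the wave-trace problem to convert local finiteness into the global finite-set statement.
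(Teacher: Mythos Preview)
Your argument for \eqref{e:flowrestrictsgroup} via conservation of the moment map is correct and clean, and the slice-theorem reasoning for the dichotomy in \eqref{e:groupsingstrat1}--\eqref{e:groupsingstrat2} and for \eqref{e:groupsingtrat3} is essentially sound: the key facts that the fixed-point set of an isometric subgroup is totally geodesic and that isotropy is constant along horizontal rays in the slice do organize the picture as you describe. This is, however, a genuinely different route from the paper's. The paper does not work directly on $Y$ at all; instead it builds Richardson's suspension $\mathcal{S}=(\widetilde{X}\times Y)/\sim$, shows via two correspondence lemmas that the local diffeomorphism $\phi:T^*\mathcal{S}\to T^*(X\times Y)$ carries the stratification of $(T\overline{\mathcal{F}}_{\mathcal{S}})^0$ onto that of $(TG)^0$ and intertwines the transverse hamiltonian flow with $\Phi^t_Y$, and then simply quotes the foliation-side analogue (Proposition~\ref{t:flowstrat}, proved in \cite{San5}). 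Your approach is more self-contained and avoids the suspension machinery entirely; the paper's approach buys uniformity, since every subsequent statement in Section~\ref{mainresults} is deduced by the same transfer principle.

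Where your proposal has a real gap is the finiteness claim in \eqref{e:groupsingstrat2}. Two of the three tools you name are not available under the stated hypotheses. First, the metric is only smooth, so ``analytic dependence of the geodesic flow'' cannot be invoked to isolate crossing times. Second, the statement is for the full trajectory $\{\Phi^t_Y(\eta_y):t\in\mathbb{R}\}$, not for a finite arc, so appealing to ``the relatively-closed nature of the trajectory segments relevant to the wave-trace problem'' is circular: relative closedness has not yet been introduced, and in any case Proposition~\ref{t:hamdyn} must hold for trajectories that are not relatively closed. Even the transversality of crossings is not automatic from Whitney regularity alone; a geodesic in a totally geodesic fixed-point set of $G_y$ could in principle revisit a deeper fixed-point set infinitely often. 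In the paper's framework this finiteness is inherited from the foliation result in \cite{San5}, where it is established using the structure theory of singular Riemannian foliations (Molino theory) rather than slice-by-slice arguments. If you want to stay with your direct approach you will need a genuinely global argument here, for instance by showing that the trajectory lies in a compact totally geodesic submanifold on which the lower strata form a finite union of lower-dimensional totally geodesic submanifolds and then arguing measure-theoretically or via the structure of geodesics on such a submanifold; the sketch you give does not yet do this.
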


In particular, observe that each hamiltonian curve is tangent to one and only one stratum in the singular phase space. The following corollary characterizes the behavior of the associated geodesics on $Y$ and projects to geodesics are orthogonal to the orbits at every point.
%
%
\begin{cor}
If $\gamma(t,y)$ is a geodesic that is tangent to $Y_k$ at $y$ and orthogonal to $\mathcal{O}_y$ then it remains orthogonal to all the orbits it meets, and either never leaves $Y_k$ or is contained in $\overline{Y_k}$.
\end{cor}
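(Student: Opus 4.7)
The plan is to recognize the corollary as the base-space shadow of Proposition~\ref{t:hamdyn}; essentially all the work has been done upstairs in $T^*Y$, and the corollary follows by projecting to $Y$ and invoking continuity. I would begin by metric-dualizing $\dot\gamma(0)$ to a covector $\eta_y\in T^*_yY$. The hypothesis that $\gamma$ is tangent to $Y_k$ at $y$ is exactly $\eta_y\in(NY_k)^0$, and orthogonality to $\mathcal{O}_y$ is $\eta_y\in(TG)^0$; together these place $\eta_y\in Y^*_k=(TG)^0\cap(NY_k)^0$. Under this identification, $\gamma$ is the base projection of the Hamiltonian curve $\Phi^t_Y(\eta_y)$.

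With this translation, the first assertion---that $\gamma$ stays orthogonal to every orbit it meets---is immediate from \eqref{e:flowrestrictsgroup}: $\Phi^t_Y(\eta_y)\in(TG)^0$ for every $t$, which at the base says $\dot\gamma(t)\perp T_{\gamma(t)}\mathcal{O}_{\gamma(t)}$.

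The containment statement splits along the dichotomy of the proposition. If \eqref{e:groupsingstrat1} holds, the Hamiltonian curve never leaves $Y^*_k$, so its base projection satisfies $\gamma(t)\in Y_k$ for all $t$, and we land in the ``never leaves $Y_k$'' branch. If instead \eqref{e:groupsingstrat2} holds, the curve lies in $Y^*_k$ off a finite set of times $\{t_1,\ldots,t_m\}$ at which it passes through $Y^*_e$. Then $\gamma(t)\in Y_k$ for every $t\notin\{t_i\}$, and since $\gamma$ is continuous, $\gamma(t_i)=\lim_{s\to t_i}\gamma(s)\in\overline{Y_k}$; hence $\gamma\subset\overline{Y_k}$. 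The boundary case $k=0$ with non-isolated fixed points is dispatched identically using \eqref{e:groupsingtrat3}.

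The real subtlety lives in the second branch: I have to be sure that between consecutive $Y^*_e$-transits the curve truly sits in $Y^*_k$ and is not wandering off into some stratum $Y^*_j$ with $j\ne k$. Strictly, the proposition only guarantees that $\Phi^t_Y(\eta_y)\cap Y^*_e$ is finite in this alternative, so pinning down the claim requires tracking the continuous evolution of $\Phi^t_Y(\eta_y)$ through $(TG)^0$ and using that any re-entry into $Y^*_j$ for $j\ne k$ must register as a passage through $Y^*_e$. Once this is secured, the remaining ingredients---continuity of $\gamma$ and closedness of $\overline{Y_k}$---are routine.
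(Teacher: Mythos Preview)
Your proposal is correct and matches the paper's approach: the corollary is stated immediately after Proposition~\ref{t:hamdyn} without a separate proof, preceded only by the remark that ``each hamiltonian curve is tangent to one and only one stratum in the singular phase space,'' which is precisely the observation needed to close the gap you flag in the second branch. (Compare also the foliation version, Proposition~\ref{t:flowstrat}, whose alternative \eqref{e:singstrat2} states the containment $\{\Phi^t(\xi_x)\}\subset\Sigma^*_k\cup\Sigma^*_e$ explicitly, so your concern about wandering into $Y^*_j$ for $j\neq k$ is already foreclosed upstream.)
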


Now recall there is an action of $G$ on the space $(TG)^0$ defined by the lifted action: if $x\in Y$ and $\xi_x\in (TG)^0$ then for each $g\in G,$ $dL_{g^{-1}}^*(\xi_x)\in (TG)^0,$ since $dL_{g^{-1}}:TG\rightarrow TG.$  This defines a left action on $(TG)^0$ which we denote by $\widetilde{L}_g$ for all $g\in G$. 
Let $\widetilde{\mathcal{O}}_{\xi_y}$ denote the orbit of the lifted action through $\xi_y\in (TG)^0$.  
%
\begin{defn}\label{d:relclosedyg}
A geodesic arc $\gamma(t)$ in $Y$ is said to be {\it relatively closed with respect to the action of $G$} if it is the projection via $\pi:T^*Y\rightarrow Y$ of a hamiltonian arc $\Phi^t_Y$ whose endpoints $\eta_{y_1}$ and $\xi_{y_2}$ are contained in $(TG)^0$ such that $\eta_{y_1}\in \widetilde{\mathcal{O}}_{\xi_{y_2}}.$ Such hamiltonian curves as above will also be said to be relatively closed with respect to the action of $G$ on $Y$. Let $\mathcal{T}^G$ denote the set of (non-zero) lengths of relatively closed geodesics, which will be called {\it  the (relatively closed) length spectrum with respect to the action of} $G$ {\it on} $Y.$

Note that the relatively closed curves include those that are smoothly closed, as well as curves that may be closed, but not smoothly, or possibly curves that connect different points in the same orbit.
\end{defn}
Let $U^G(t,x,y)$ denote the Schwartz kernel of the $G$-invariant Laplacian. Formally, it is defined as follows:
\begin{equation}
U^G(t,x,y)=\sum_{\lambda_j\in spec(Y,\Delta)^G} e^{-it\lambda_j}\phi_j(x)\cdot\phi_j(y),
\end{equation}
where $\phi_j(x)$ is the eigenfunction of $\lambda_j\in spec(Y,\Delta)^G.$ 
 Let $U^G(t,x,y)$ denote the Schwartz kernel of the $G$-invariant wave operator. Formally, the trace of this operator is 
\begin{equation}
trace(U^G)=\sum_{\lambda_j\in spec(Y,\Delta)^G} e^{-it\lambda_j}.
\end{equation}
We then have the following expression for the wave front set of the generalized function defined by the trace.

\begin{thm}\label{t:sojourntimesgroup}
In the notation previously established, 
\begin{equation}\label{e:invariants3}
WF\bigl(trace(U^G)\bigr)\subset\{(T,\tau)\,|\,\tau<0\,,\,T\in\mathcal{T}^G\}.
\end{equation}
\end{thm}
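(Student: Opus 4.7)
The plan is to reduce the problem to a foliation-theoretic statement via Richardson's isospectral suspension, apply the corresponding wave trace result from \cite{San5}, and then translate the foliation length spectrum back into $\mathcal{T}^G$. First I would invoke the construction of \cite{Ri2} to form a higher-dimensional foliated manifold $(\widetilde{Y},\mathcal{F})$, equipped with a bundle-like metric, whose basic Laplacian is isospectral to $\Delta^G$ with a canonical bijection of eigenspaces. Because the two operators have identical spectra, the tempered distributions $trace(U^G)$ and the analogous basic wave trace on $\widetilde{Y}$ agree as distributions on $\mathbb{R}$, and in particular they have the same wave front set.

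Next I would appeal to the Poisson-relation half of the main theorem of \cite{San5}: the wave front set of the basic wave trace of a Riemannian foliation with bundle-like metric is contained in the set of pairs $(T,\tau)$ with $\tau<0$ and $T$ equal to the length of a hamiltonian arc in $(T\mathcal{F})^0$ whose endpoints lie in conormals related by the action on leaf closures. The sign condition $\tau<0$ is a direct consequence of working with the half-wave kernel $e^{-it\sqrt{\Delta^G}}$, whose principal symbol is supported on the negative half-line in the dual time variable; this sign is preserved under both the pullback to the diagonal and the pushforwards $\Pi_*$ and $(\Pi_G)_*$ appearing in \eqref{e:distrep}.

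The geometric heart of the argument is the identification of the foliation's singular length spectrum with $\mathcal{T}^G$. I would use the explicit form of the suspension: leaves of $\mathcal{F}$ fiber over, and have closures that correspond to, the $G$-orbits of $Y$, and the bundle-like normal geometry of $\mathcal{F}$ projects isometrically onto the orthogonal complement of $TG$ inside $TY$. Consequently hamiltonian arcs in $(T\mathcal{F})^0$ project to hamiltonian arcs in $(TG)^0$ with lengths preserved, and with stratification-respecting dynamics guaranteed by Proposition \ref{t:hamdyn}. The endpoint relation in the foliation setting --- that the endpoint covectors lie in conormals to the same leaf closure --- then pulls back to exactly the condition $\eta_{y_1}\in\widetilde{\mathcal{O}}_{\xi_{y_2}}$ appearing in Definition \ref{d:relclosedyg}, which gives the desired inclusion.

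The hardest step will be this final identification at the singular strata, where the momentum map $J$ has variable rank, $J^{-1}(0)$ is not a manifold, and orbits of several dimensions can appear along a single hamiltonian trajectory (case \eqref{e:groupsingstrat2}). Verifying that the correspondence between the two singular length spectra survives across strata, and that no extraneous lengths are introduced by auxiliary directions in the suspension $\widetilde{Y}$ that do not descend to $Y$, is where the full force of Proposition \ref{t:hamdyn} is needed. Both issues should reduce to careful bookkeeping with that proposition together with the explicit geometry of Richardson's construction, rather than to any new analytic input.
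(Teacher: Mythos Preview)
Your proposal is correct and follows essentially the same route as the paper: pass to Richardson's suspension $(\mathcal{S},\mathcal{F}_\mathcal{S})$, use isospectrality (Lemma~\ref{t:isospectrality}) to equate the two wave traces as distributions on $\mathbb{R}$, apply the foliation Poisson relation (Theorem~\ref{t:sojourntimes}), and then identify $\mathcal{RT}(\mathcal{S},\mathcal{F}_\mathcal{S})$ with $\mathcal{T}^G$. The paper packages the last step as Proposition~\ref{p:equivalencedef}, whose content is precisely your ``geometric heart'': the holonomy of the leaf closures of $\mathcal{S}$ is realized by the lifted $G$-action $\widetilde{L}_g$, so the relative-closure conditions in Definitions~\ref{d:relclosedyg} and~\ref{d:relclosed} coincide; your concern about extraneous lengths coming from the auxiliary $\widetilde{X}$-directions is handled automatically because hamiltonian arcs in $(T\overline{\mathcal{F}}_\mathcal{S})^0$ have no leafwise component, and Lemma~\ref{l:inducedflow} shows $\phi$ intertwines the two flows.
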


Let $F_k^T$ denote the union of the relatively closed hamiltonian arcs above with length $T$ that are tangent to $Y^*_k$ and let $F^T$ denote the union of relatively closed hamiltonian arcs of length $T$, so that
\begin{equation}
F^T=\bigcup_{k=k_0}^N F^T_k.
\end{equation}

In the trace formula that follows, the leading order terms will be related to the arcs that project to Y to the stratum the lowest dimensional orbits.
\begin{defn}
Let $d(T)$ denote the smallest positive integer such that $F^T_{d(T)}$ is non-empty.
\end{defn}

We also define the following:

\begin{defn}\label{d:cleangroup}
We will say that the set of endpoints $F^T_k$  of relatively closed hamiltonian arcs of period $T$ is {\it clean} if (1) $F^T_k$ is a smooth submanifold of $T^*Y$; and (2) for every $\eta_{y_1}\in F^T_k$ with $\widetilde{L}_g(\xi_{y_2})=\eta_{y_1}=\Phi^T_Y(\xi_{y_2})$ then
\begin{equation} 
(d\Phi^T_Y)_{\xi_{y_2}}(T_{\xi_{y_2}}F^T_k)=d\widetilde{L}_g(T_{\xi_{y_2}}F^T_k).
\end{equation} 
\end{defn}

\begin{lemma}\label{l:densitygroup}
There exists a smooth canonical density, $d\tilde{\mu}^T_k$, on each component of the relative fixed point set $F^T_k.$
\end{lemma}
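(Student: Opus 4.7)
The plan is to reduce the existence of the canonical density to the analogous result for foliations established in the author's previous work \cite{San5}, via the isospectral correspondence of Richardson \cite{Ri2}. Recall that this correspondence replaces $(Y,G)$ with a suspended foliated manifold $\tilde{Y}$ whose basic Laplacian is unitarily equivalent to $\Delta^G$. Under this construction, the conormal bundle $(TG)^0\subset T^*Y$ is identified with the conormal bundle to the leaves of the suspended foliation, the hamiltonian flow $\Phi^t_Y$ is identified with the restriction of the geodesic flow on $\tilde{Y}$ to this conormal bundle, and the lifted $G$-action $\widetilde{L}_g$ corresponds to holonomy displacement along leaves.

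First I would translate the definition of $F^T_k$ and the cleanness hypothesis of Definition \ref{d:cleangroup} across this correspondence. A relatively closed hamiltonian arc on $Y$ of length $T$ corresponds to a geodesic arc on $\tilde{Y}$ whose endpoints lie in the same leaf closure, which is the class of arcs analyzed in \cite{San5}. The stratification $F^T = \bigcup_k F^T_k$ matches the stratification of the foliation-side fixed-point set by the codimension of leaf closures, and one checks that the symplectic cleanness condition $(d\Phi^T_Y)_{\xi_{y_2}}(T_{\xi_{y_2}} F^T_k) = d\widetilde{L}_g(T_{\xi_{y_2}} F^T_k)$ is equivalent on the foliation side to the cleanness of the intersection of the graph of the geodesic flow with the graph of the holonomy, which is the hypothesis used in \cite{San5}.

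Second, I would invoke the construction from \cite{San5} of a canonical smooth density on each component of the foliation-side relative fixed-point set. That construction proceeds by restricting the Liouville density on $T^*\tilde{Y}$ and factoring out the natural one-dimensional measures along the flow direction and along the holonomy directions tangent to the leaf closures; the cleanness hypothesis guarantees that these complementary directions are identified unambiguously and that the quotient density is smooth on each component. Pulling this density back through the smooth identification provided by the suspension then yields a smooth canonical density $d\tilde{\mu}^T_k$ on each component of $F^T_k$. The suspension is a trivial fibration in a direction transverse to $(TG)^0$, so the pullback introduces no additional factors or singularities.

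The main obstacle will be verifying that the stratum-by-stratum correspondence between the $G$-manifold setting and the foliation setting is faithful, in particular for hamiltonian curves that are tangent to $Y^*_k$ but may intersect $Y^*_e$ in a finite set as allowed by Proposition \ref{t:hamdyn}. One must confirm that such curves correspond to analogous curves on $\tilde{Y}$ and that the components of $F^T_k$ on which the density is asserted to be smooth really are smooth submanifolds of $T^*Y$ under the cleanness hypothesis, not merely smooth off of $Y^*_e$. Once this stratified compatibility is established, which should follow from the explicit way the suspension resolves the singular orbit space, the construction of $d\tilde{\mu}^T_k$ transfers directly from \cite{San5}.
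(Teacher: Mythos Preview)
Your proposal is correct and follows essentially the same route as the paper: the paper states that Lemma~\ref{l:densitygroup} is an immediate corollary of Proposition~\ref{t:diffeointegral}, whose proof does exactly what you outline---it invokes the foliation-side density $d\mu_{Z^T_k}$ from \cite{San5} (Lemma~\ref{l:density}), uses the local diffeomorphism $\phi$ from the suspension to $T^*(X\times Y)$, and then transfers to $F^T_k\subset T^*Y$.

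One point of precision worth noting: you describe the transfer as a \emph{pullback} through the suspension, but $Z^T_k$ has dimension $p=\dim X$ larger than $F^T_k$, so the density is obtained by a \emph{push-forward} (fiber integration) along the $X$-direction rather than a pullback. Concretely, the paper decomposes $d\mu_{Z^T_k}=d\pi^*(\chi_{\mathcal{F}})\otimes d\nu^T_k\otimes d\mu'_{Z^T_k}$, identifies the leafwise factor $d\pi^*(\chi_{\mathcal{F}})$ with $dvol(X)$ under $\phi$, and integrates it out using the normalization $vol(X)=1$; the remaining factor, pushed forward by $\pi_2$, is $d\tilde{\mu}^T_k$. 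Your statement that ``the pullback introduces no additional factors'' is morally right but for this reason---the extra $X$-fiber contributes a factor of $vol(X)=1$---rather than because the map is a diffeomorphism onto $F^T_k$.
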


Let $\Gamma^T=\{(T,\tau)\,|\,\tau<0\}$ denote the ray over $T\in \mathcal{T}^G$.  The relatively closed hamiltonian arcs in $T^*Y$ for the hamiltonian function $|\eta_y|_y$ make up conic submanifolds $F^T_k$ whose connected components are finite in number and denoted by $F^{T,j}_k.$ Let $S(F^{T,j}_k)$ be the set $\{(T,\tau)\in F^{T,j}_k\,|\,|\tau|=1\},$ and let $q_{k}^{T,j}:=dim(S(F^{T,j}_k))$ and let $q^T_k=max\{q^{T,j}_k\},$ and let $q_T=q^T_{d(T)}$. Finally, we must assume that the set $F^T_k$ of relative fixed points is clean for all $T\in \mathcal{T}^G$ in the sense Definition \ref{d:cleangroup}. 

Then we have the wave trace formula for the $G$-invariant spectrum:
\begin{thm}\label{t:fulltracegroup}
With the above assumptions and notation,  
\begin{equation}\label{e:pwt1}
trace\bigl(U^G(t,x,y)\bigr)=\sum_{T\in \mathcal{T}^G}\nu_{T}(t),
\end{equation}
where $\nu_{T}\in I^{-1/4-(q_T-d(T))/2}(\mathbb{R},\Gamma^T),$ where each $\nu_{T}$ has an expansion of the form
\begin{equation}\label{e:expansiongroup}
\nu_{T}(t)=\sum_{j=0}^\infty\sigma_j(T)(t-T+i0)^{-\frac{q_T}{2}-\frac{d(T)}{2}-j}\,mod\,C^\infty(\mathbb{R}),
\end{equation}
modulo a factor arising from a related Maslov index. In the above, the coefficients $\sigma_j(T)$ contain contributions from stationary phase arguments and involve integrals over $S(F^T_{d(T)})$, and may also have contributions involving the fixed point sets $F^T_k$  for $k>d(T)$ when such sets are non-empty.

The leading term $\sigma_0(T),$ modulo Maslov factors, is
\begin{equation}\label{e:symbolgroup}
\Bigl[\int_{S(F^T_{d(T)})}\sigma(U)\,d\tilde{\mu}^T_{d(T)}\Bigr]\,\tau^{\frac{q_T}{2}-\frac{d(T)}{2}}\sqrt{d\tau},
\end{equation}
where $\sigma(U)$ is the symbol of the Schwartz kernel of the wave operator, $e^{-it\Delta},$ and $\Delta$ is the ordinary laplacian acting on functions on $Y.$  
\end{thm}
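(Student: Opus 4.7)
The plan is to proceed by reduction to the foliation case via Richardson's suspension construction from \cite{Ri2}, applying the basic wave trace asymptotics already established by the author in \cite{San5}, and then translating the foliation-side output back into data on $Y$. First, one builds the suspension manifold $\widehat{Y}$ carrying a Riemannian foliation $\mathcal{F}$ with bundle-like metric whose basic Laplacian $\Delta_B$ is isospectral to $\Delta^G$ on $Y$; consequently the traces of the corresponding wave operators agree as tempered distributions on $\mathbb{R}$, and the analysis of $\mathrm{trace}(U^G)$ reduces to the analysis of the basic wave trace on $(\widehat{Y},\mathcal{F})$.

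Next, one invokes the main result of \cite{San5}, which expresses the basic wave trace as a sum of Lagrangian distributions $\hat{\nu}_T$ supported on the rays $\Gamma^T$, with singularities at the lengths of hamiltonian arcs in $T^*\widehat{Y}$ that are conormal to $\mathcal{F}$ and whose endpoints lie in the same closure of a lifted leaf. The stratum-by-stratum decomposition of the fixed point set, the cleanness hypothesis, the order of each summand, and the principal symbol formula all come from \cite{San5} in the foliated category, yielding an expansion of the shape \eqref{e:expansiongroup} with foliation-side density $d\hat{\mu}^T_k$ and integers $\hat{q}_T, \hat{d}(T)$.

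The substantive new work is to identify the foliation-side objects with the $G$-manifold objects in the statement. Using Proposition \ref{t:hamdyn} and its corollary, hamiltonian arcs in $T^*Y$ lift canonically to arcs in $T^*\widehat{Y}$ conormal to $\mathcal{F}$; conversely, projecting from $\widehat{Y}$ to $Y$ identifies leaf closures with $G$-orbit closures, so the relatively closed condition on the foliation side matches Definition \ref{d:relclosedyg}. The stratification of $Y$ by orbit dimension then corresponds under suspension to the stratification of $\widehat{Y}$ by leaf-closure dimension, whence the fixed point sets $F^T_k$ in $T^*Y$ coincide (up to the suspension fibre) with their foliation counterparts and the integers $q_T, d(T)$ match. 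The canonical density $d\tilde{\mu}^T_k$ of Lemma \ref{l:densitygroup} is obtained by fibre-integration of $d\hat{\mu}^T_k$ along the suspension, which is well defined because the $G$-action preserves both the symplectic form on $T^*Y$ and the hamiltonian $|\eta_y|_y$.

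The hardest step is the bookkeeping in this final identification: one must verify that the pushforward of the foliation-side principal symbol produces \emph{exactly} formula \eqref{e:symbolgroup}, which requires tracking Maslov factors, the interplay between the isotropy representations on the normal bundles $NY_k$ and the corresponding transverse structures on $\widehat{Y}$, and the way in which arcs tangent to strata with $k>d(T)$ enter as genuinely lower-order contributions to $\sigma_j(T)$ for $j\geq 1$. Once these identifications are in place, the wave front set containment \eqref{e:invariants3} supplied by Theorem \ref{t:sojourntimesgroup}, together with stationary phase on each conic component $S(F^{T,j}_{d(T)})$ under the cleanness hypothesis, assembles the claimed sum \eqref{e:pwt1} with leading term \eqref{e:symbolgroup}, completing the proof.
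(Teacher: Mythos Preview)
Your proposal is correct and follows essentially the same route as the paper: reduce to the basic wave trace on the suspension $(\mathcal{S},\mathcal{F}_\mathcal{S})$ via Richardson's construction and the isospectrality Lemma~\ref{t:isospectrality}, invoke the foliation trace formula (Theorem~\ref{t:fulltrace}), and then translate back through the correspondences of strata, flows, relatively closed arcs, fixed-point sets, and densities (Lemmas~\ref{l:correspondence}--\ref{l:inducedflow}, Proposition~\ref{p:equivalencedef}, Lemma~\ref{t:fpsets}, Proposition~\ref{t:diffeointegral}). The one point the paper makes more explicit than your sketch is the dimension count: on the foliation side $e_T = p + q_T$ and $\kappa(T)=d(T)$, so the leaf dimension $p=\dim\widetilde{X}$ cancels in the order $-1/4 - e_T/2 + (p+\kappa(T))/2$, and the normalization $\mathrm{vol}(X)=1$ is what makes the fibre integration in Proposition~\ref{t:diffeointegral} produce exactly $d\tilde\mu^T_k$ with no extraneous constant.
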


\section{Proof of the $G$-Invariant Wave Trace Formula}
\label{proof}

The means of proving the main results arise from showing the correspondence between two different isospectral problems: that of the $G$-invariant spectrum and the basic spectrum for an associated foliated manifold. This has previously been used by the author in \cite{San3} and, prior to that,  by K. Richardson to prove results for $G$-manifolds in \cite{Ri2}. In light of this, this section is organized as follows. In Section \ref{isospectralproblem}, we describe how to construct, via a suspension, a foliated manifold, $\mathcal{S}$, given the $G$-manifold $(Y,G)$. We then demonstrate that the $G$-invariant Laplacian on $Y$ is isospectral to the basic spectrum of the associated foliation. We also show how the stratification of $Y$ by orbit dimension corresponds to the stratification by the foliation by leaf closure dimension. The basic wave trace results for a Riemannian foliation are summarized in Section \ref{summary}. Finally, in Section \ref{proofs}, we apply the foliation results of the previous section to the special case of this particular suspension to recover the wave-trace invariants in terms of the manifold $(Y, G)$, making use of the special structure of the suspension.

\subsection{An isospectral problem}
\label{isospectralproblem}

We begin by recalling the construction due to K. Richardson in \cite{Ri2}.  In that paper, the researcher showed that one can associate to any $G$-action on a manifold $Y$ of dimension $n$ a Riemannian foliation of the same codimension, $n$. This foliation is defined by a suspension whose space of leaf closures is related to the orbit space $Y/G$ via a metric space isometry.  

The construction is as follows.  We begin by picking a set of maximal tori of $G$, $\{T_1, \, T_2, \dots, T_j\}$ whose Lie algebras span $\mathfrak{g},$ the Lie algebra of $G$.  For each $i$, $1\le i\le j,$  let $g_i$ denote an element of  $T_i$ whose cyclic group is dense in $T_i$.  The subgroup generated by $A=\{g_1, \,g_2, \dots,\, g_j\}$ will, therefore, be dense in a connected component of the identity element of $G$.  By adding a finite set of $\ell$ group elements to $A$, the resulting list of elements generates a subgroup $G_0$ which is dense in $G$.  The foliation will be defined by suspending the action of $G_0$.  Next, pick any compact connected Riemannian manifold $X$ with unit volume for which one may define a surjective homomorphism $\varphi: \pi_1(X)\rightarrow G_0$. (As observed in \cite{Ri2}, one could choose $X$ to be homeomorphic to the connected sum of $j+\ell$ copies of $S^1\times S^2$, whose fundamental group is the free group on $j+\ell$ generators.) Let $\widetilde{X}$ be the universal cover of $X$ equipped with the induced metric, and let  $[\gamma]\cdot \tilde{x}$ for $[\gamma]\in \pi_1(X)$ denote the associated deck transformation on $\tilde{x}\in\widetilde{X}.$  One can regard the deck transformations as acting on $\widetilde{X}$ freely on the left by isometries.  Let $\mathcal{S}=(\widetilde{X}\times Y)/\sim$ where the identification is defined by the diagonal action: for all $\tilde{x}\in \widetilde{X}$, $y\in Y$, $[\gamma]\in \pi_1(X)$
\begin{equation}\label{e:identification}
(\tilde{x},y)\sim\bigl([\gamma^{-1}]\cdot \tilde{x},\varphi([\gamma])\cdot y\bigr).
\end{equation}
Let $\sigma:\widetilde{X}\times Y\rightarrow \mathcal{S}$ be the quotient map that identifies the orbits to points, and denote the points of $\mathcal{S}$ by $s_0=[(x_0,y_0)].$ The leaves of the associated foliation, $\mathcal{F}_{\mathcal{S}},$ are 
\begin{equation}
L_{s_0}= \{ [(\tilde{x},y_0)]\,|\, \tilde{x}\in \widetilde{X}\},
\end{equation}
and the leaf closures are
\begin{equation}
\overline{L}_{s_0}= \{ [(\tilde{x},y)]\,|\, \tilde{x}\in \widetilde{X}, y\in \mathcal{O}_{y_0}\},
\end{equation}
by virtue of the density of $G_0$ in $G$.  (See Lemma 2.1 of \cite{Ri2}.)

Note that this foliation depends on the choice of $X$ and the maximal tori; however, the transverse properties of the singular foliation defined by the leaf closures that are of interest in computing the $G$-invariant wave trace will be seen to be independent of these choices. For example, observe that the transverse structure of the leaf closures depends only on $G$ and not the choice of the subgroup $G_0.$

Let $T\mathcal{F}_{\mathcal{S}}$ denote the associated distribution and let $N\mathcal{F}_{\mathcal{S}}$ denote its complement. The dimension of $T\mathcal{F}_{\mathcal{S}}$ is the dimension of $\widetilde{X}$, which we denote by $p.$  Locally the metric on $\mathcal{S}$ is the product metric, and it is bundle-like for the foliation--i.e., the distance between leaves is locally constant with respect to this metric.\footnote{More precisely, a metric is bundle-like for a foliation $\mathcal{F}$ if for every open $U \subset M$ and for all vector fields $X$ and $Y$ that are perpendicular to the leaves and that satisfy $[X,Z]\in T\mathcal{F}$ and $[Y,Z]\in T\mathcal{F}$ for all $Z\in T\mathcal{F},$ then $g(X,Y)$ is a basic function for the foliation.} 
Via the work of K. Richardson in \cite{Ri2}, it is known that there is a metric space isometry between the space of leaf closures of the above foliation and the orbits of $Y$. As a consequence, no leaf of the associated foliation is dense, and the dimension of space of basic functions for the associated foliations is greater than zero, by virtue of the initial assumptions on the action.

It is known that there is a correspondence, which we will denote by $\Gamma,$ between the $G$-invariant functions on $Y$ and the functions on $S$ that are constant along the leaves (and hence the leaf closures, by continuity). Such functions are said to be {\it basic} with respect to the foliation, and we denote them by $C^\infty_B(\mathcal{S}, \mathcal{F}_\mathcal{S}).$ The correspondence $\Gamma$ is as follows:  a function $f$ on $Y$ that is constant along the orbits of $G$ may be regarded as a function on the product $\widetilde{X}\times Y,$ which is constant on the closures of the leaves, and thus can be identified with a function $F$ on the suspension $\mathcal{S}$, which correspond to functions on the product $\widetilde{X}\times Y$ that satisfies
\begin{equation}\label{e:functionalident}
F(\tilde{x},y)=F([\gamma^{-1}]\cdot \tilde{x},y\cdot\varphi([\gamma])).
\end{equation} 
Similarly, if a smooth function $F$ on $\mathcal{S}$ is basic for the foliation on $\mathcal{S}$ then, by definition, it is constant along the leaves of $\mathcal{S}.$  Thus, it can be identified with a function on $Y.$ Furthermore, $F$ must also be constant on the leaf closures of $\mathcal{S}$ by continuity.  Thus, such a function can be naturally identified with a function of $Y$ which is constant along the orbits of $G.$  

Given this identification, it is easily seen either directly (as below) or from the work of K. Richardson in \cite{Ri2}, that the following lemma holds:

\begin{lemma}\label{t:isospectrality}
Let $\Delta^{\mathcal{S}}$ denote the Laplacian in the above metric on $\mathcal{S}.$
The basic spectrum of $(S,\mathcal{F}_\mathcal{S})$ is identical to the $G$-invariant spectrum of $Y.$
\end{lemma}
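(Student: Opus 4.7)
The plan is to establish the lemma in two steps: first exhibit the natural bijection $\Gamma$ between $C^\infty(Y)^G$ and $C^\infty_B(\mathcal{S},\mathcal{F}_\mathcal{S})$, then show that $\Gamma$ intertwines the $G$-invariant Laplacian on $Y$ with the basic Laplacian on $\mathcal{S}$, so that the two operators have the same eigenvalues with eigenspaces of the same dimensions.

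For the first step, I would take a smooth $G$-invariant $f \in C^\infty(Y)^G$, pull it back to $\widetilde{X}\times Y$ via the projection onto $Y$, and check that the result descends to $\mathcal{S}$, i.e.\ that it is invariant under the identification \eqref{e:identification}. This is immediate because $\varphi([\gamma])\in G_0 \subset G$ and $f$ is $G$-invariant. The descended function is constant along each leaf $L_{s_0}=\{[(\tilde{x},y_0)]\}$, hence basic. Conversely, given a smooth basic $F$ on $\mathcal{S}$, lift it to $\widetilde{X}\times Y$, fix any $\tilde{x}_0$, and define $f(y)=F(\tilde{x}_0,y)$; the invariance \eqref{e:functionalident} and density of $G_0$ in $G$ (combined with continuity) force $f$ to be $G$-invariant, and independence of $\tilde{x}_0$ follows from $F$ being basic. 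This makes $\Gamma$ a linear bijection.

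For the second step, I would exploit that the metric on $\mathcal{S}$ is \emph{locally} the product metric $g_{\widetilde{X}}+g_Y$. Two consequences will be key. First, with respect to a local product metric, the leaves $\widetilde{X}\times\{y\}$ are totally geodesic, hence minimal, so the mean curvature form $\kappa$ of $\mathcal{F}_\mathcal{S}$ vanishes. This is exactly the condition under which the basic Laplacian $\delta_B d_B$ agrees with the restriction of the full Laplacian $\Delta^\mathcal{S}$ to basic functions (the general formula $\Delta_B=\Delta|_{\text{basic}}-\iota_{\kappa^\sharp}d$ collapses to $\Delta_B=\Delta|_{\text{basic}}$). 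Second, the local product structure gives $\Delta^\mathcal{S}=\Delta^{\widetilde{X}}+\Delta^Y$ in any product chart, and on a basic function $F=\Gamma(f)$, which by construction is independent of the $\widetilde{X}$-variable, the first term kills $F$ while the second returns $\Delta^Y f=\Delta f$. Thus $\Delta_B \Gamma(f) = \Gamma(\Delta^G f)$ on $C^\infty(Y)^G$. Since both operators are self-adjoint with discrete spectrum on their respective Hilbert completions, and $\Gamma$ is an isometry up to the constant $\mathrm{vol}(\widetilde{X}/\pi_1(X))=1$ of basic $L^2$-norms (the chosen unit volume of $X$ is what makes $\Gamma$ literally an isometry), the spectra and multiplicities coincide.

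The main obstacle I anticipate is the careful justification that $\kappa=0$ and hence $\Delta_B=\Delta^\mathcal{S}|_{\text{basic}}$; this needs the bundle-like property together with the fact that the deck-transformation identification in \eqref{e:identification} preserves the product metric (the $\pi_1(X)$-action acts by isometries on $\widetilde{X}$ and by isometries on $Y$ via $\varphi$), so the local product structure is globally consistent and the leaves are locally totally geodesic submanifolds. A secondary technical point is verifying that $\Gamma$ identifies the correct $L^2$-completions so that eigenvalue multiplicities match; this follows from Fubini applied to the product chart together with the normalization $\mathrm{vol}(X)=1$ recorded in the construction.
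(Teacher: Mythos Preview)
Your proposal is correct and follows essentially the same route as the paper: both arguments show that the mean curvature form $\kappa$ of $\mathcal{F}_\mathcal{S}$ vanishes because the metric on $\mathcal{S}$ is locally the product metric $g_X+g_Y$, conclude that the basic Laplacian is the restriction of the full Laplacian to basic functions, and then use the correspondence $\Gamma$ to intertwine $\Delta_B^\mathcal{S}$ with $\Delta^G$. The only cosmetic difference is that the paper packages the intertwining via the basic projector $P$ and the identity $\Delta_B^\mathcal{S}P=P\Delta^\mathcal{S}$ from \cite{PaRi}, whereas you compute the local splitting $\Delta^\mathcal{S}=\Delta^{\widetilde{X}}+\Delta^Y$ directly and verify the $L^2$-isometry of $\Gamma$ explicitly.
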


\begin{proof}
First recall the mean curvature one-form of a foliation: 
\begin{equation}
\kappa(Z)=\sum_{i=1}^pg(\nabla_{E_i}E_i, Z),\quad \text{where }Z\in C^\infty(N\mathcal{F}_{\mathcal{S}}),
\end{equation}
where $g$ is the metric on $\mathcal{S}$ and $\{E_1,\,E_2,\dots,\,E_p\}$ is an orthonormal basis of $T\mathcal{F}_{\mathcal{S}}$. Via \cite{PaRi}, it is known that if the mean curvature form satisfies the condition that $\iota_X \kappa=\iota_X d\kappa=0$ for all vector fields $X$ tangent to the leaves, then the basic spectrum is contained in the spectrum of the ordinary Laplacian on functions . Hence, it is possible to define a basic projector, $P:C^\infty(M)\rightarrow C^\infty_B(\mathcal{S}, \mathcal{F}_\mathcal{S}).$ In this case, $\kappa(Z)$ is always zero since the metric $g$ is locally just the product metric $g_X+g_Y,$ where $g_X$ and $g_Y$ are the metrics on $X$ and $Y,$ respectively. Thus,
\begin{equation}\label{e:laps}
\Delta_B^\mathcal{S}P=P\Delta^\mathcal{S},
\end{equation}
where $\Delta_B^\mathcal{S}$ is the Laplacian on $\mathcal{S}$ restricted to the basic functions.
From the above discussion of $\Gamma,$ it is easily seen that
\begin{equation}
\Delta^\mathcal{S}_BP\Gamma=\Gamma\Delta^G,
\end{equation}
and the result follows immediately.
\end{proof}

Next, we relate the orbit structure of the action of $G$ on $Y$ to the structure of leaf closures of $\mathcal{S}.$ Recall, the orbit structure of a locally smooth group $G$ on the manifold $Y.$  (See, for example, Chapter IV of \cite{Bre}.) Let $H$ be a principal isotropy subgroup for the action, and let $Y_{(H)},$ denote the union of orbits of $G$ on $Y$ that are of orbit type $G/H.$  These form an open dense set, as noted earlier, where $d$ denotes the maximal orbit dimension.  There may be other orbits of the same dimension, the exceptional orbits, of orbit type $G/K$ where $H$ is conjugate to a subgroup of $K$ (we assume, without loss of generality, that $H\subset K$) and $K/H$ is a finite, non-trivial group. 
For any subgroup $K'$ of $G$, let $Y_{(K')}$ denotes the set of orbits of type $G/K'$ on $Y,$ and let $E_{(K')}$ be the exceptional orbits--those of the same dimension as those in $Y_{(K')},$ but not of the same orbit type:
\begin{equation*}
E_{(K')}=\{y\in \overline{Y_{(K')}}\,|\, dim (\mathcal{O}_y)= dim (G/K'),\,type (\mathcal{O}_y)\not= type(G/K')\}.
\end{equation*}
For each $j\le d,$ the union of the orbits of dimension $j$ is, therefore,
\begin{equation}
Y_j=(Y_{(K')}\cup E_{(K')}),
\end{equation}
where $K'$ is conjugate to a subgroup of all the isotropy groups of $Y_{(K')}$ with $dim(G/K')=j.$

Next recall the structure of the leaf closures on a foliated manifold, see for example, \cite{Molino}. The leaves of an arbitrary foliation are not generally closed; however, the closure of any leaf is an embedded submanifold of $M$ and is itself a union of leaves, and is foliated by the leaves that it contains. Further, the leaf closures generally vary in dimension, and are defined by a variable-dimensional, completely integrable distribution $T\overline{\mathcal{F}}$. The variation of the leaf closure dimension over the foliated manifold defines a natural stratification (Section 5.4, \cite{Molino}), as follows.  Let $\ell(x)$ be the function that assigns to a point $x$ the dimension of the leaf closure containing $x.$  This function takes its values in the positive integers $\{p+k\}$ where $p$ is the dimension of the leaves, and $k$ ranges over $0\le k_{1}\le k\le k_{N}< q,$ with $k_1$ and $k_N$ denoting the minimal and maximal values for $k$, respectively. It is known that the function $\ell(x)$ is lower semi-continuous \cite{Molino}, Chapter 5. Let $\Sigma_{p+k}$ denote the set $\{\ell^{-1} (p+k)\}.$

The correspondence between the orbit structure of $G$ on $Y$ and the stratification of the associated foliation $(\mathcal{S},\mathcal{F}_\mathcal{S})$ is given by the following.
\begin{lemma}\label{orbitstrat}
Let $p=dim(\widetilde{X})$ as above. For the associated foliation $(\mathcal{S},\mathcal{F}_\mathcal{S}),$ the strata of $(\mathcal{S},\mathcal{F}_\mathcal{S})$ correspond to the union of orbits of the same dimension: 
\begin{equation}\label{e:strat}
\Sigma_{p+k}=(\widetilde{X}\times Y_{k})/\sim.
\end{equation}

\end{lemma}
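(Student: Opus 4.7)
The plan is to reduce the claim to a dimension computation for leaf closures together with a saturation check on the equivalence relation. By the formula for $\overline{L}_{s_0}$ recalled immediately before the lemma, for any $s_0 = [(\tilde{x}_0, y_0)] \in \mathcal{S}$ one has $\overline{L}_{s_0} = \sigma(\widetilde{X} \times \mathcal{O}_{y_0})$. If I can show that this image is an embedded submanifold of dimension $p + \dim(\mathcal{O}_{y_0})$, then $\ell(s_0) = p + \dim(\mathcal{O}_{y_0})$, so $s_0 \in \Sigma_{p+k}$ precisely when $y_0 \in Y_k$. The lemma would then follow once I verify that $\widetilde{X} \times Y_k$ is saturated by $\sim$.

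For the dimension count, I would observe that $\sim$ is the orbit equivalence of the diagonal action of the discrete group $\pi_1(X)$ on $\widetilde{X} \times Y$ defined by $[\gamma] \cdot (\tilde{x}, y) = ([\gamma^{-1}] \cdot \tilde{x}, \varphi([\gamma]) \cdot y)$. Since $\pi_1(X)$ acts freely and properly discontinuously on $\widetilde{X}$ by deck transformations, the diagonal action on $\widetilde{X} \times Y$ inherits these properties. Because $\varphi([\gamma]) \in G$ sends the $G$-orbit $\mathcal{O}_{y_0}$ to itself as a set, the restriction of the diagonal action to $\widetilde{X} \times \mathcal{O}_{y_0}$ is well-defined, free, and properly discontinuous. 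Consequently $\sigma$ restricted to $\widetilde{X} \times \mathcal{O}_{y_0}$ is a covering map onto $\overline{L}_{s_0}$, yielding $\dim(\overline{L}_{s_0}) = \dim(\widetilde{X}) + \dim(\mathcal{O}_{y_0}) = p + \dim(\mathcal{O}_{y_0})$.

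For the saturation check, suppose $(\tilde{x}, y) \in \widetilde{X} \times Y_k$ and $(\tilde{x}', y') \sim (\tilde{x}, y)$. Then $y' = \varphi([\gamma]) \cdot y$ for some $[\gamma] \in \pi_1(X)$, and since $\varphi([\gamma]) \in G_0 \subset G$ preserves $G$-orbit dimensions, $y' \in Y_k$. Hence $\widetilde{X} \times Y_k$ is $\sim$-invariant, and $(\widetilde{X} \times Y_k)/\sim = \sigma(\widetilde{X} \times Y_k)$ is precisely the set of $s_0$ admitting a representative $(\tilde{x}_0, y_0)$ with $y_0 \in Y_k$, which by the dimension count above coincides with $\Sigma_{p+k}$.

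The main subtlety I anticipate is not the dimension arithmetic but the passage from the set-theoretic description of $\overline{L}_{s_0}$ to the embedded-submanifold statement needed to match Molino's stratification framework: freeness of the deck action on $\widetilde{X}$ is exactly what rules out degenerate collapses that could make the apparent dimension of the leaf closure drop, and it is what underwrites the local-diffeomorphism step above. Once that step is established, the rest is essentially bookkeeping using the fact that $\varphi$ factors through $G$.
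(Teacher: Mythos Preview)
Your argument is correct and is precisely a fleshed-out version of the paper's one-line proof, which reads in full: ``This follows purely from the characterization of the leaf closures above, and dimensionality considerations.'' The leaf-closure formula $\overline{L}_{s_0}=\sigma(\widetilde{X}\times\mathcal{O}_{y_0})$ together with your covering/dimension count and saturation check are exactly the ``dimensionality considerations'' the paper invokes without spelling out.
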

\begin{proof}
This follows purely from the characterization of the leaf closures above, and dimensionality considerations.  
%
%
\end{proof}


\begin{rem}
From the above, it follows from the above lemma that the wave invariants of the basic spectrum of the associated foliation are precisely those of the $G$-invariant spectrum. We will show that these invariants depend only on the action of $G$ on $Y,$ and not the subgroup $G_0,$ by virtue of the density of $G_0$ in $G.$

\end{rem}

\subsection{Summary of related foliation results}
\label{summary}

This section is organized analogously to Section \ref{mainresults}. We begin by defining the singular phase space for the space of leaf closures for a foliated manifold, and its associated stratification. Let $(M,\,\mathcal{F})$ be a manifold foliated by $p$-dimensional leaves, and let $\pi:T^*M\rightarrow M$ denote the usual map on the cotangent bundle $(T^*M,\,\omega),$ equipped with its usual symplectic form. Let $T\overline{\mathcal{F}}$ denote the subset of $T M$ consisting of vectors tangent to the leaf closures. This subset is a completely integrable distribution of variable dimension. The space
\begin{equation}\label{e:scs}
(T\overline{\mathcal{F}})^0=\{\xi_x\in T^*_xM\,|\,\xi_x(v_x)=0\,\,\forall v_x\in (T\overline{\mathcal{F}})_x\}.
\end{equation}
is the singular phase space for the space of leaf closures. As in the case of the obit space of a $G$-action, it is stratified, as follows:

%

\begin{defn}\label{t:stratdefn}
In the notation above,
\begin{equation}
(T\overline{\mathcal{F}})^0=\bigcup_{k=0}^{k_N}\Sigma^*_k
\end{equation}
where for $k>0$
\begin{equation}\label{e:sigmastark}
\Sigma^*_k=(N\Sigma_{p+k})^0\cap (T\overline{\mathcal{F}})^0,
\end{equation}
and 
\begin{equation}\label{e:sigmastarzero} 
\Sigma^*_0=\Bigl(\bigcup_{\ell} \bigl((N\Sigma^*_{p+\ell})^0\bigr)^c\cup(N\Sigma_p)^0\Bigr)\cap (T\overline{\mathcal{F}})^0,
\end{equation}
where the union is taken over $\ell$ with $0\le \ell<k_{N}.$ Later, we will wish to distinguish between certain exceptional points in $\Sigma^*_0$, thus let
\begin{equation}\label{e:sigmaexc}
\Sigma^*_e=\Bigl(\bigcup_{\ell} \bigl((N\Sigma^*_{p+\ell})^0\bigr)^c\Bigr)\cap (T\overline{\mathcal{F}})^0.
\end{equation}
Note that in \eqref{e:sigmastark}, when $k=k_{N},$ $\Sigma_{k_{N}}^*=(T\overline{\mathcal{F}})^0|_{\Sigma_{N}},$ which is open and dense in $(T\overline{\mathcal{F}})^0.$ 
\end{defn}


Note that the subset of vectors tangent to the leaves of the foliation, $T\mathcal{F},$ is contained in $T\overline{\mathcal{F}}$ and hence the analogously defined $(T\mathcal{F})^0$ contains our stratified phase space $(T\overline{\mathcal{F}})^0.$ This fact is significant because $(T\mathcal{F})^0$ is coisotropic, and as such, admits a foliation (the null-foliation) by $p$-dimensional leaves. Further, $(T\overline{\mathcal{F}})^0$ is a saturated subset of $(T\overline{\mathcal{F}})^0,$ and hence is also foliated by $p$-dimensional leaves, (see Section 3, \cite{San5}). Further, we can associate to $(T\overline{\mathcal{F}})^0$ a groupoid, $\mathcal{G}^*(\overline{\mathcal{F}}),$ with special properties, given in the subsequent proposition:  
    
\begin{prop}\label{t:groupoidprop}
There exists a groupoid $\mathcal{G}^*(\overline{\mathcal{F}})$ associated to $(T\overline{\mathcal{F}})^0$ such that the orbits of $\mathcal{G}^*(\overline{\mathcal{F}})$ are $(p+k)$-dimensional leaves that foliate each stratum $\Sigma^*_k.$ Furthermore, for each $k$ each of the $(p+k)$-dimensional leaves of $\Sigma^*_k$ is a union of $p$-dimensional leaves induced by the null foliation on $(T\mathcal{F})^0$. The stratum $\Sigma^*_0$ is foliated by $p$-dimensional leaves, which are orbits of the groupoid as well.
\end{prop}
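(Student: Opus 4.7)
The plan is to construct $\mathcal{G}^*(\overline{\mathcal{F}})$ by lifting the holonomy structure of the singular Riemannian foliation by leaf closures to the singular phase space $(T\overline{\mathcal{F}})^0$, invoking the machinery developed in \cite{San5}. First I would recall that since $T\mathcal{F}$ is an involutive (regular) distribution, its annihilator $(T\mathcal{F})^0\subset T^*M$ is a coisotropic submanifold, and thus carries a canonical null foliation whose leaves have dimension equal to $\mathrm{rank}(T\mathcal{F})=p$; these leaves are the integral manifolds of the characteristic distribution $T((T\mathcal{F})^0)^\omega$. From the inclusion $T\mathcal{F}\subset T\overline{\mathcal{F}}$ we immediately get $(T\overline{\mathcal{F}})^0\subset (T\mathcal{F})^0$, and the saturation claim — that $(T\overline{\mathcal{F}})^0$ is a union of $p$-dimensional null leaves — reduces to checking that the Hamiltonian flows of functions vanishing on $(T\mathcal{F})^0$ preserve the defining conditions of $(T\overline{\mathcal{F}})^0$, which follows because those flows translate a covector along the leaves of $\mathcal{F}$, under which the condition of annihilating $T\overline{\mathcal{F}}$ is preserved by the bundle-like hypothesis.

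Next, to exhibit the $(p+k)$-dimensional orbits on each stratum $\Sigma^*_k$ for $k>0$, I would work stratum by stratum in the base. On $\Sigma_{p+k}$, the distribution $T\overline{\mathcal{F}}$ has constant rank $p+k$ and is involutive, so it defines a regular foliation $\overline{\mathcal{F}}|_{\Sigma_{p+k}}$. Its holonomy groupoid acts on the conormal bundle $(N\Sigma_{p+k})^0$ via the holonomy-invariant linear structure on the normal bundle of the leaf closure foliation (guaranteed by the Riemannian and bundle-like hypotheses). Intersecting with $(T\overline{\mathcal{F}})^0$ yields $\Sigma^*_k$, and the orbits of the lifted holonomy action are exactly the $(p+k)$-dimensional leaves sought. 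That each such leaf is a union of $p$-dimensional null leaves is then automatic from the factorization $\mathrm{Hol}(\overline{\mathcal{F}}|_{\Sigma_{p+k}}) = \mathrm{Hol}(\mathcal{F})\circ(\text{transverse closure holonomy})$, which matches the groupoid composition structure in \cite{San5}.

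The groupoid $\mathcal{G}^*(\overline{\mathcal{F}})$ itself is then defined by assembling the cotangent-lifted holonomy groupoids across all strata, taking source and target maps from the natural projections and composition from the holonomy composition of the leaf closure foliation. The groupoid axioms are inherited directly from those of the holonomy groupoid of $\overline{\mathcal{F}}$ on each stratum, combined with the null-foliation groupoid of $(T\mathcal{F})^0$.

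The main obstacle will be the behavior on $\Sigma^*_0$, in particular the exceptional locus $\Sigma^*_e$ where covectors annihilate $T\overline{\mathcal{F}}$ without lying in any $(N\Sigma_{p+\ell})^0$. Here the leaf closure dimension of the base point can jump in neighboring strata, yet I need the orbits of $\mathcal{G}^*(\overline{\mathcal{F}})$ to remain exactly $p$-dimensional. The key point is that on $\Sigma^*_0$ the transverse component of the holonomy action collapses: a covector in $\Sigma^*_e$ vanishes on the entire $T\overline{\mathcal{F}}$ fiber, so the lifted transverse holonomy acts trivially and only the $p$-dimensional null foliation contribution survives. To verify this rigorously I would use the lower semicontinuity of $\ell(x)$ together with the local description of $\overline{\mathcal{F}}$ near a stratum boundary from Molino's theory, so that the holonomy along a path into a lower-dimensional stratum contracts the transverse orbit dimension to zero in the limit, leaving precisely a $p$-dimensional null leaf of $(T\mathcal{F})^0$ as the groupoid orbit through such a point.
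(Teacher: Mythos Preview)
The paper does not provide a self-contained proof of this proposition; it appears in Section~\ref{summary} as a summary of results from \cite{San5}, and the paper's own contribution is the construction described in Remark~\ref{r:holonomygpoid}: $\mathcal{G}^*(\overline{\mathcal{F}})$ is obtained by generalizing the lifted holonomy groupoid $\mathcal{G}^*(\mathcal{F})$ from triples $[\xi_x,\eta_y,dh^*_{\alpha^{-1}}]$ with $\alpha$ a leafwise path to triples with $\overline{\alpha}$ a path in a leaf \emph{closure}. Your approach---lift the holonomy of the leaf-closure foliation to $(T\overline{\mathcal{F}})^0$ stratum by stratum and assemble---is consistent with this description, and your handling of the $k>0$ strata and of saturation by null leaves is in line with what the Remark indicates.

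There is, however, a genuine gap in your treatment of $\Sigma^*_0$, specifically of the exceptional locus $\Sigma^*_e$. Your key claim, ``a covector in $\Sigma^*_e$ vanishes on the entire $T\overline{\mathcal{F}}$ fiber, so the lifted transverse holonomy acts trivially,'' is a non sequitur: \emph{every} covector in $(T\overline{\mathcal{F}})^0$ vanishes on $T\overline{\mathcal{F}}$ by definition, so this condition cannot distinguish $\Sigma^*_e$ from the strata $\Sigma^*_k$ with $k>0$, where the transverse holonomy manifestly does \emph{not} act trivially (it produces $(p+k)$-dimensional orbits). Concretely, if $\xi_x\in\Sigma^*_e$ has base point $x\in\Sigma_{p+k}$ for some $k>0$, the leaf closure $\overline{L}_x$ is $(p+k)$-dimensional and the holonomy along paths in $\overline{L}_x$ acts on $\xi_x$ in the usual way, so the naive orbit is $(p+k)$-dimensional; an additional argument is required to force the orbit dimension down to $p$ on $\Sigma^*_e$. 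Your fallback---lower semicontinuity of $\ell(x)$ together with ``holonomy along a path into a lower-dimensional stratum''---does not apply either: the groupoid elements $\overline{\boldsymbol{\alpha}}=[x,y,[\overline{\alpha}]]$ in Remark~\ref{r:holonomygpoid} are represented by paths lying entirely in a single leaf closure, hence in a single stratum $\Sigma_{p+k}$, not by paths crossing between strata. Whatever mechanism in \cite{San5} produces $p$-dimensional orbits on $\Sigma^*_0$ (or whatever precise reading of the last sentence of the Proposition is intended) is not captured by your sketch.
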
 

\begin{rem}\label{r:holonomygpoid} The structure of this groupoid will be necessary later, so we will elaborate a bit more with regard to its definition. This is a generalization of the holonomy groupoid with respect to the leaves on a foliated manifold $M,$  denoted by $\mathcal{G}(\mathcal{F}),$ whose definition we recall below from \cite{W}. 
Its elements, $\boldsymbol{\alpha},$ are ordered triples $\boldsymbol{\alpha}=\bigl[x,y,[\alpha] \bigr]$ where $x$ and $y$ are points belonging to the same leaf $L$ of $(M,\mathcal{F}),$ where $[\alpha]$ is an equivalence class of piecewise smooth curves lying entirely in $L$ with $x=\alpha(0)$ and $y=\alpha(1).$ This groupoid on $M$ lifts to a groupoid $\mathcal{G}^*(\mathcal{F})$ on $(T\mathcal{F})^0,$ whose elements define local diffeomorphisms $h_\alpha$ of local transversals in the usual fashion, and via the infinitesimal holonomy map, $dh_\alpha,$ define a holonomy action on certain transverse covectors as follows. The natural action of $\mathcal{G}(\mathcal{F})$ on $(T\mathcal{F})^0$ to is defined for $\xi_{x}\in (T_{x}\mathcal{F})^0$ by
\begin{equation}\label{e:holaction}
\forall X_{y}\in (T_y\mathcal{F})^0\quad (\boldsymbol{\alpha}\cdot\xi)_{y}(X_{y})=\xi_{x}(dh_\alpha^{-1}(X_{y})),
\end{equation}
where $dh_\alpha:(T_{x}\mathcal{F})^0\rightarrow (T_{y}\mathcal{F})^0$ is the differential of the holonomy map of the holonomy element $\boldsymbol{\alpha}.$ 
If $\eta_y=dh^*_{\alpha^{-1}}(\xi_x)$, then the triples $[\xi_x, \eta_y,dh^*_{\alpha^{-1}}]$ form the holonomy groupoid $\mathcal{G}^*(\mathcal{F})$ over $(T\mathcal{F})^0.$ As noted above, the null foliation of the space $(T\mathcal{F})^0$ has $p$-dimensional leaves which are orbits of $\mathcal{G}^*(\mathcal{F}).$ The generalization to $\mathcal{G}^*(\overline{\mathcal{F}})$ is achieved by considering triples $\boldsymbol{\overline{\alpha}}=\bigl[x,y,[\overline{\alpha}] \bigr]$ where $x$ and $y$ are points belonging to the same leaf  closure $\overline{L}$ of $(M,\mathcal{F}),$ where $[\overline{\alpha}]$ is an equivalence class of piecewise smooth curves lying entirely in $\overline{L}$ with $x=\alpha(0)$ and $y=\alpha(1).$ The rest of the construction of $\mathcal{G}^*(\overline{\mathcal{F}})$ is defined analogously.
Note that $\mathcal{G}^*(\mathcal{F})|_{(T\overline{\mathcal{F}})^0}\subset \mathcal{G}^*(\overline{\mathcal{F}}).$
\end{rem}


Next, we consider the hamiltonian flow and its behavior with respect to the singular phase space. Let $\Phi^t(\xi_x)$ denote the hamiltonian curve associated to $H(\xi_x)=|\xi_x|_x,$ which is determined by the bundle-like metric on $M$. We recall from \cite{San3} that  $\Phi^t(\xi_x)$ restricts to $(T\mathcal{F})^0,$ where $H(\xi_x)=H_{\mathcal{F}^\perp}(\xi_x),$ the part of the hamiltonian that arises from the transverse metric on $M.$ This transverse flow is particularly well-behaved with respect to the singular phase space $(T\overline{\mathcal{F}})^0$ and its stratification, due to the bundle-like nature of the metric on $M.$ In fact, on each stratum $\Sigma^*_k$ the hamiltonian function $H(\xi_x)=H_{\overline{\mathcal{F}}^\perp}(\xi_x),$ the part of the hamiltonian that arises from the metric that is transverse to the foliation by leaf closures. Further, the transverse flow sends leaves to leaves, with respect to all types of leaves in Proposition \ref{t:groupoidprop}.  Furthermore, we have the following proposition:

%
%
\begin{prop}\label{t:flowstrat}
The transverse hamiltonian flow of $H$ restricts to the singular phase space.  In other words, if $\xi_x\in (T\overline{\mathcal{F}})^0$ (with $\xi_x\in\Sigma^*_k$, say) then the hamiltonian vector field $\Xi_H(\xi_x)\in T\Sigma^*_k$ and 
\begin{equation}\label{e:flowrestricts}
\{\Phi^t(\xi_x)\,|\,t\in\mathbb{R}\}\subset (T\overline{\mathcal{F}})^0.
\end{equation} 
Furthermore,  if $k>0$ then either 
\begin{equation}\label{e:singstrat1}
\{\Phi^t(\xi_x)\,|\,t\in\mathbb{R}\}\subset \Sigma_k^*
\end{equation}
or
\begin{equation}\label{e:singstrat2}
\{\Phi^t(\xi_x)\,|\,t\in\mathbb{R}\}\subset \Sigma_k^*\cup\Sigma_e^*.
\end{equation}
Furthermore, the intersection of $\{\Phi^t(\xi_x)\,|\,t\in\mathbb{R}\}$ with $\Sigma_e^*$ is finite.
If $k=0$ and $\Sigma_p$ is non-empty and does not consist of isolated leaves, then if $\Phi^t(\xi_x)$ is tangent to $(N\Sigma_p)^0\cap(T\overline{\mathcal{F}})^0\subset \Sigma^*_0$ then 
\begin{equation}\label{e:singtrat3}
\{\Phi^t(\xi_x)\,|\,t\in\mathbb{R}\}\subset (N\Sigma_p)^0\cap(T\overline{\mathcal{F}})^0.
\end{equation}
Otherwise, $\Xi_H(\xi_x)$ is not tangent to $\Sigma^*_e$.
\end{prop}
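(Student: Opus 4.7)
The plan is to prove the proposition in three stages, exploiting the bundle-like metric on $M$ together with the Molino structure of the leaf closure foliation and the groupoid action of $\mathcal{G}^*(\overline{\mathcal{F}})$ from Proposition \ref{t:groupoidprop}.

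First, to show that $\Phi^t$ preserves $(T\overline{\mathcal{F}})^0$, I would use the orthogonal Pythagorean decomposition afforded by the bundle-like metric: on $(T\overline{\mathcal{F}})^0$ the hamiltonian reduces to $H_{\overline{\mathcal{F}}^\perp}(\xi_x)$, and its hamiltonian vector field is tangent to $(T\overline{\mathcal{F}})^0$. Concretely, if local sections $V_1,\ldots,V_{p+k}$ of $T\overline{\mathcal{F}}$ define $(T\overline{\mathcal{F}})^0$ in a neighborhood via $\langle \xi, V_i\rangle = 0$, then one computes $\Xi_H \langle \xi, V_i\rangle$ and uses the bundle-like property to show that each resulting expression lies again in the span of the $V_i$. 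Geometrically, this is the statement that a geodesic starting orthogonal to a leaf closure remains orthogonal to every leaf closure it subsequently meets, and hence the conormal condition defining $(T\overline{\mathcal{F}})^0$ is preserved along the flow.

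Second, for the stratum preservation when $k > 0$, I would use the lower semicontinuity of the leaf closure dimension function $\ell$. Near a point of $\Sigma_{p+k}$, Molino's commuting sheaf produces a local transverse action for which $\Sigma_{p+k}$ appears as a fixed locus, and the transverse hamiltonian $H_{\overline{\mathcal{F}}^\perp}$ is invariant under this action. This invariance, combined with tangency of $\Xi_H(\xi_x)$ to $\Sigma^*_k$ at one point, propagates the flow so that it stays in the fixed locus, hence in $\Sigma^*_k$, as long as the local model persists. The flow can leave $\Sigma^*_k$ only through points where the model degenerates, and by the definitions of $\Sigma^*_0$ and $\Sigma^*_e$ these degeneration points lie in $\Sigma^*_e$. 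The finiteness of $\{\Phi^t(\xi_x)\} \cap \Sigma^*_e$ follows because the exceptional stratum is locally a closed nowhere dense subset whose transverse structure carries a quadratic-type normal form for $H_{\overline{\mathcal{F}}^\perp}$, under which the flow crosses the exceptional locus transversally at isolated parameter values.

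Third, in the case $k=0$ with $\Phi^t(\xi_x)$ tangent to $(N\Sigma_p)^0$, I would invoke the classical fact from \cite{Molino}, Chapter 5, that the minimal leaf closure stratum $\Sigma_p$ is totally geodesic with respect to any bundle-like metric, so the base geodesic remains in $\Sigma_p$ and its canonical lift remains in $(N\Sigma_p)^0 \cap (T\overline{\mathcal{F}})^0$. The final clause, that otherwise $\Xi_H(\xi_x)$ is not tangent to $\Sigma^*_e$, follows by contradiction from the finiteness statement of the previous step, since tangency throughout an interval of parameters would produce an open set of times inside $\Sigma^*_e$. The main obstacle I anticipate is making the finiteness argument for $\{\Phi^t(\xi_x)\} \cap \Sigma^*_e$ fully rigorous, since $\Sigma^*_e$ decomposes into pieces of several codimensions with non-trivial incidence relations; carrying this out likely requires the detailed transverse analytic description of the exceptional strata established in \cite{San5}.
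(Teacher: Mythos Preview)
The paper does not actually prove Proposition~\ref{t:flowstrat}; it appears in Section~\ref{summary}, which is explicitly a summary of results already established in \cite{San5} (and partly in \cite{San3}). So there is no proof in the present paper to compare against, only a citation.

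That said, your sketch is a reasonable outline of the kind of argument that underlies the result in \cite{San5}. The first stage, invariance of $(T\overline{\mathcal{F}})^0$ under $\Phi^t$, is exactly the statement (recalled in the remark following Definition~\ref{d:relclosed}) that geodesics orthogonal to one leaf closure remain orthogonal to all leaf closures they meet; this is the content of the bundle-like condition and is in \cite{Molino}, Chapter~6. Your third stage, the totally geodesic nature of $\Sigma_p$, is likewise standard Molino theory.

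Where your proposal is thin is precisely where you flag it: the second stage. Invoking ``a quadratic-type normal form for $H_{\overline{\mathcal{F}}^\perp}$'' to get transversal crossings of $\Sigma^*_e$ is suggestive but not a proof; the exceptional set $\Sigma^*_e$ is defined as a complement of several conormal annihilators and does not obviously carry a single normal form. Similarly, the argument that invariance of $H_{\overline{\mathcal{F}}^\perp}$ under the local transverse action forces the flow to stay in $\Sigma^*_k$ needs the statement that $\Sigma_{p+k}$ is itself totally geodesic for the transverse metric (not just a fixed locus), which is true but requires the Molino structure theorem in full. As you yourself note, filling these gaps is essentially what is carried out in \cite{San5}, so your proposal is best read as an outline of that paper's argument rather than an independent proof.
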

It follows that a hamiltonian curve can be tangent to only one of the $\Sigma^*_k$.
%
%
%

Equipped with these results we can define a notion of a relatively closed curve with respect to the foliation:

\begin{defn}\label{d:relclosed}
An arc of the hamiltonian flow $\Phi^t$ through $\xi_x$ will be said to be {\it relatively closed with respect to the singular Riemannian foliation} $\overline{\mathcal{F}}$ with period $T$ if for endpoints $\xi_x$ and $\eta_y=\Phi^T(\xi_x)$ there is a homotopy class $[\overline{\alpha}]$ of a curve $\overline{\alpha}$ wholly contained in the leaf closure with endponts $x$ and $y$, such that $dh^*_{\overline{\alpha}^{-1}}(\xi_x)=\Phi^T(\xi_x).$
This is equivalent to the existence of a groupoid element of the form $[\xi_x,\Phi^T(\xi_x),dh^*_{\overline{\alpha}^{-1}}]\in \mathcal{G}^*(\overline{\mathcal{F}}).$ Let $\mathcal{RT}(M,\mathcal{F})$ denote the set of lengths of hamiltonian arcs that are relatively closed with respect to the foliation of $(M,\mathcal{F})$.
\end{defn}

\begin{rem}
We note that $\gamma(t,x)=\pi(\Phi^t(x,\xi))$ are geodesics in $M.$ Let $\Xi$ denote the hamiltonian vector field of the transverse bundle-like metric. Then, in local distinguished coordinates, it is easily seen that $\gamma'(0,x)=(d\pi)_{\xi_x}(\Xi_H)\perp T_x\overline{\mathcal{F}}.$ From Chapter 6 of \cite{Molino}, it is known that if $\gamma(t,x)$ is a geodesic passing through $x$, and is perpendicular to the leaf closures at one point, then this geodesic remains perpendicular to all the leaf closures that it meets. Thus, the projections of such relatively closed hamiltonian curves are geodesic arcs that are orthogonal to all the leaf closures through which the geodesic passes.

\end{rem}

From \cite{San3}, we have the following result:
\begin{thm}\label{t:sojourntimes}
In the notation previously established, 
\begin{equation}\label{e:invariants}
WF\bigl(Trace(U_B(t,x,y) \bigr)\subset\{(T,\tau)\,|\,\tau<0\,,\,T\in\mathcal{RT}(M,\mathcal{F})\}.
\end{equation}

\end{thm}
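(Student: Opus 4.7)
The plan is to realize the basic wave trace as a distribution on $\mathbb{R}$ built from three Fourier integral operators and then to compute its wave front set by repeated application of the pushforward/composition calculus for canonical relations. Concretely, I would write
\[
\mathrm{Trace}(U_B) = \Pi_* \Delta^* (P\otimes I)_* U(t,x,y),
\]
where $U(t,x,y)$ is the Schwartz kernel of $e^{-it\sqrt{\Delta^{\mathcal{S}}}}$ on $M$, $P$ is the basic projector (averaging against an invariant transverse measure on leaf closures), $\Delta$ is the diagonal embedding $\mathbb{R}\times M\hookrightarrow \mathbb{R}\times M\times M$, and $\Pi:\mathbb{R}\times M\to \mathbb{R}$ is the projection. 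The wave front set of the trace is then controlled by the composition of the canonical relations of each factor.

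The first step is to identify the canonical relation of $P$. By Remark \ref{r:holonomygpoid}, $P$ can be described as a (distributional) sum over holonomy classes of paths lying in leaf closures, so its microsupport is contained in the set of covector pairs $(\xi_x,\eta_y)\in (T\overline{\mathcal{F}})^0\times (T\overline{\mathcal{F}})^0$ related by a groupoid element $[\xi_x,\eta_y,dh^*_{\overline{\alpha}^{-1}}]\in\mathcal{G}^*(\overline{\mathcal{F}})$, that is $\eta_y=dh^*_{\overline{\alpha}^{-1}}(\xi_x)$. The canonical relation of $U$ is, as usual, the twisted graph of the hamiltonian flow $\Phi^t$ generated by $H(\xi_x)=|\xi_x|_x$, which by Proposition \ref{t:flowstrat} actually restricts to $(T\overline{\mathcal{F}})^0$ and preserves its stratification.

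Composing with the diagonal restriction and the projection and tracking which fibers contribute nontrivial phase, one finds that a point $(T,\tau)$ can lie in $WF(\mathrm{Trace}(U_B))$ only if there exist $\xi_x\in (T\overline{\mathcal{F}})^0$ and a homotopy class $[\overline{\alpha}]$ of a path in the leaf closure through $x$ such that
\[
\Phi^T(\xi_x) = dh^*_{\overline{\alpha}^{-1}}(\xi_x),
\]
which is precisely the relatively closed condition of Definition \ref{d:relclosed}, so $T\in \mathcal{RT}(M,\mathcal{F})$. The sign condition $\tau<0$ is automatic since the hamiltonian $H(\xi_x)=|\xi_x|_x$ is strictly positive off the zero section, forcing the dual covariable on $\mathbb{R}_t$ to be negative for any contributing critical point.

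The main obstacle is cleanness of the composition: the canonical relations involved generally fail to intersect transversally because $(T\overline{\mathcal{F}})^0$ is stratified and the orbits of $\mathcal{G}^*(\overline{\mathcal{F}})$ have variable dimension. To handle this I would invoke Proposition \ref{t:flowstrat} to confine each nontrivial hamiltonian trajectory to a single stratum $\Sigma^*_k$ (or, exceptionally, to $\Sigma^*_k\cup\Sigma^*_e$ with only finitely many excursions into $\Sigma^*_e$), and then argue stratum by stratum that the composition is clean in the sense of Duistermaat–H\"ormander, so the general pushforward/pullback wave front bounds apply. Since only the containment \eqref{e:invariants} is claimed here and no asymptotic order is computed, no stationary phase argument is required at this stage; that finer analysis is reserved for the trace formula and is what allows the $G$-invariant result of Theorem \ref{t:fulltracegroup} to be read off from the foliated case via Lemmas \ref{t:isospectrality} and \ref{orbitstrat}.
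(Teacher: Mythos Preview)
The paper does not actually prove this theorem here; it is quoted verbatim from the author's earlier work \cite{San3} as part of the ``Summary of related foliation results.'' Your outline---realizing $\mathrm{Trace}(U_B)$ as $\Pi_*\Delta^*(P\otimes I)_*U$ and reading off the wave front set from the composed canonical relations---is the standard Duistermaat--Guillemin template and is essentially what \cite{San3} carries out, so the approach is correct.

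One point of internal confusion in your write-up is worth flagging. You call cleanness ``the main obstacle'' and propose a stratum-by-stratum clean-intersection argument, but then (correctly) note that for the bare containment \eqref{e:invariants} no stationary phase is needed. These two remarks are in tension: for a wave front set \emph{inclusion}, cleanness plays no role whatsoever---H\"ormander's functorial bounds for $WF$ under pullback and pushforward hold without any transversality or cleanness hypothesis. Cleanness only enters when one wants the composite to be a genuine Lagrangian distribution with computable order and symbol, which is the content of Theorem~\ref{t:fulltrace}, not Theorem~\ref{t:sojourntimes}. The actual delicate point for \eqref{e:invariants} is the one you pass over quickly: when leaf-closure dimension varies, the basic projector $P$ is not a classical Fourier integral operator with a smooth Lagrangian canonical relation, so one must justify that its wave front \emph{relation} is nonetheless contained in the groupoid relation $\{(\xi_x,\eta_y):\eta_y=dh^*_{\overline{\alpha}^{-1}}(\xi_x)\text{ for some }\boldsymbol{\overline{\alpha}}\in\mathcal{G}^*(\overline{\mathcal{F}})\}$. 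That is where the real work in \cite{San3} lies, and it is what makes the stratification of $(T\overline{\mathcal{F}})^0$ relevant even at the level of the Poisson relation.
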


Let $Z^T$ denote the union of relatively closed hamiltonian arcs of period $T$. Using Proposition \ref{t:flowstrat}, we observe that each $Z^T$ can be decomposed with respect to the stratification as follows:

\begin{equation}
Z^T=\bigcup_{k=0}^{k_N} Z^T_k,
\end{equation}
where for each $k$, $Z^T_k$ is the union of all the relatively closed curves of length $T$ that are tangent to $\Sigma_k^*$. (In some cases, $Z^T_k$ may be empty.)

Let $\kappa(T)$ be the smallest integer such that $Z^T_{\kappa(T)}$ is non-empty.
 
\begin{defn}\label{d:clean}
Recalling the various foliations of the strata in Proposition \ref{t:groupoidprop}, let $\widetilde{T\mathcal{F}_k}$ denote the distribution defining the foliation of $\Sigma^*_k$, and let $\widetilde{N\mathcal{F}_k}$ denote the transverse distribution.  The holonomy $dh_{\overline{\alpha}}$ defines a holonomy action on the leaves of the foliation $(\Sigma^*_k,\widetilde{T\mathcal{F}_k}),$ which we denote by $d\widetilde{h}_{(\overline{\alpha},\xi_x)},$ as in \cite{San3}, Section 2.2. We say that the relatively closed set of curves $Z^T_k$ is {\it clean} if (1)$ Z^T_k$ is a smooth submanifold of $(T\mathcal{F})^0;$ and (2) for every $\xi_x\in Z^T_k$ with $\eta_y=(dh_{\overline{\alpha}}^{-1})^*\xi_x=\Phi^T(\xi_x)$ then $d\Phi^T_{\xi_x}(T_{\xi_x}Z^T_k)=T_{\eta_y}Z^T_k$ for $\overline{\alpha}\in \mathcal{G}(\overline{\mathcal{F}})$ with $\overline{\alpha}(0)=x$ and $\overline{\alpha}(1)=y.$ The condition that $\eta_y=(dh_{\overline{\alpha}}^{-1})^*\xi_x$ implies that for all $\xi_x\in Z^T_k$
\begin{equation}\label{e:cleanone}
d\Phi^T_{\xi_x}(\widetilde{N_{\xi_x}\mathcal{F}_k})=d\widetilde{h}_{(\overline{\alpha},\xi_x)}(\widetilde{N_{\xi_x}\mathcal{F}_k})=\widetilde{N_{\eta_y}\mathcal{F}_k}.
\end{equation}
\end{defn}

\begin{lemma}\label{l:density}
There exists a smooth canonical density, $d\mu_{Z^T_k},$ on each component of the relative fixed point set $Z^T_k.$
\end{lemma}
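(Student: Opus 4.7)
The plan is to adapt the Duistermaat--Guillemin construction of the canonical density on a clean fixed-point set of a hamiltonian flow to the foliated/stratified setting, exploiting the transverse symplectic structure on $\Sigma^*_k$ together with the cleanness condition from Definition \ref{d:clean}. The density on $Z^T_k$ will be built as a symplectic volume on a transversal, with the ``flow direction'' and ``leaf direction'' divided out, and the main task is to verify that this local construction is invariant under the choices involved so that it descends to a global density on each component.

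First I would localize the problem to a single stratum and a single component of $Z^T_k$. By Proposition \ref{t:groupoidprop}, each $\Sigma^*_k$ is foliated by the $(p+k)$-dimensional orbits of the groupoid $\mathcal{G}^*(\overline{\mathcal{F}})$, each of which decomposes further into $p$-dimensional null-leaves of $(T\mathcal{F})^0$. The canonical symplectic form $\omega$ on $T^*M$ restricts to $\Sigma^*_k$ and descends to a transverse symplectic form on the quotient by the null-leaves, since the null-foliation is coisotropic and the leaf closures behave like a Riemannian submersion transversally (the bundle-like metric hypothesis). Working on a small transversal $\Sigma$ to the null-foliation through $\xi_x \in Z^T_k$, I obtain a symplectic manifold on which both $\Phi^T$ (via Proposition \ref{t:flowstrat}) and the holonomy action $d\widetilde{h}_{(\overline{\alpha},\xi_x)}$ descend to symplectic diffeomorphisms.

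Next I would extract the density from the cleanness condition. The cleanness condition \eqref{e:cleanone} states exactly that $Z^T_k \cap \Sigma$ is the clean fixed-point set of the symplectic map $d\widetilde{h}_{(\overline{\alpha})}^{-1}\circ\Phi^T$ on the transverse symplectic manifold; equivalently, $T_{\xi_x}(Z^T_k \cap \Sigma) = \ker\bigl(d\Phi^T_{\xi_x} - d\widetilde{h}_{(\overline{\alpha},\xi_x)}\bigr)$ on $\widetilde{N_{\xi_x}\mathcal{F}_k}$. For a clean fixed-point set of a symplectic map, the standard construction yields a canonical density on the fixed-point manifold as
\[
d\mu_{Z^T_k \cap \Sigma} = \frac{|\omega^{n}/n!|}{\bigl|\det\bigl(I - d\widetilde{h}_{(\overline{\alpha})}^{-1}\circ d\Phi^T\bigr|_{N}\bigr)\bigr|^{1/2}},
\]
where $N$ is any symplectic complement to $T(Z^T_k \cap \Sigma)$ in the transversal. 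The non-degeneracy of the denominator on $N$ follows from cleanness, and a standard symplectic linear algebra computation shows the expression is independent of the choice of complement. I would then glue this local density along null-leaves to get $d\mu_{Z^T_k}$ on a neighborhood in $Z^T_k$.

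The main obstacle, and the part I would have to check most carefully, is the independence of the construction from the choice of holonomy representative $[\overline{\alpha}]$ used to express the relative-closure relation at a point, and the compatibility of the local densities across the groupoid action when one slides $\xi_x$ along an orbit of $\mathcal{G}^*(\overline{\mathcal{F}})$. The key input is that the holonomy map $d\widetilde{h}_{(\overline{\alpha},\xi_x)}$ depends only on the groupoid element (not on the representative curve) and acts symplectically on the transverse directions; combined with the fact that $\Phi^T$ commutes with the groupoid action on each stratum (from Proposition \ref{t:flowstrat} and the bundle-like hypothesis), this yields that the locally defined densities transform correctly under change of reference point, so they patch to a globally defined smooth density on each component of $Z^T_k$. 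The stratum $\Sigma^*_e$ does not cause trouble because, by Proposition \ref{t:flowstrat}, its intersection with any $\Phi^t$-orbit meeting $Z^T_k$ is finite, and the cleanness hypothesis applies stratum-by-stratum.
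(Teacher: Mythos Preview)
Your approach for the transverse symplectic component matches the paper's construction (recalled within the proof of Proposition~\ref{t:diffeointegral}, with full details deferred to \cite{San5}): both pass to the horizontal space $\widetilde{N\mathcal{F}_k}$, observe that $\Phi^T$ and the holonomy $d\widetilde{h}_{(\overline{\alpha},\xi_x)}$ act there as symplectic diffeomorphisms, and invoke the clean-fixed-point density construction of Section~4 of \cite{DG}. Your verification of independence from the choice of holonomy representative is also in the right spirit.

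There is, however, a genuine gap in the leaf direction. The set $Z^T_k$ is saturated by the $(p+k)$-dimensional orbits of $\mathcal{G}^*(\overline{\mathcal{F}})$, so a density on $Z^T_k$ must have both a transverse and a leaf-wise component; the splitting $T(Z^T_k)=\widetilde{T\mathcal{F}_k}\oplus\widetilde{N\mathcal{F}_k}$ makes this explicit. Your construction produces a density on the transversal $Z^T_k\cap\Sigma$, but the phrase ``glue this local density along null-leaves'' does not by itself yield a density on $Z^T_k$: extending along leaves requires a specified canonical density on $\widetilde{T\mathcal{F}_k}$, and you have not supplied one. The paper fills this in via the characteristic form $\chi_{\overline{\mathcal{F}}}$ of the $(p+k)$-dimensional foliation, built from the bundle-like metric and pulled back by $\pi:T^*M\to M$; the full density is then the tensor product
\[
d\mu_{Z^T_k}=d\pi^*(\chi_{\overline{\mathcal{F}}})\otimes d\mu'_{Z^T_k},
\]
where $d\mu'_{Z^T_k}$ is exactly the transverse Duistermaat--Guillemin density you constructed. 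Without this leaf-wise factor your object lives only on a transversal, not on $Z^T_k$ itself, and in particular the later integrals over $S(Z^T_k)$ in Theorem~\ref{t:fulltrace} would not make sense.
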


Let $\Gamma^T=\{(T,\tau)\,|\,\tau<0\}$ denote the ray over $T\in \mathcal{RT}.$ Henceforward, we assume that the set $Z^T_k$ of relative fixed points of the hamiltonian flow $\Phi^T$ on $(T\mathcal{F})^0$ is clean for all $T\in \mathcal{RT}(M,\mathcal{F})$ in the sense of Definition \ref{d:clean}. The relatively closed hamiltonian arcs of a given length $T$ make up conic submanifolds $Z^T_k$ whose connected components are finite in number and denoted by $Z^{T,\ell}_k.$ Let $S(Z^{T,\ell}_k)$ be the set $\{(T,\tau)\in Z^{T,\ell}_k\,|\,|\tau|=1\},$ let $e^{T,\ell}_k:=dim\bigl(S(Z^{T,\ell}_k)\bigr),$ let $e^T_k=max\{e^{T,\ell}_k\},$ and let $e_T=e^T_{\kappa(T)}.$ 

\begin{thm}\label{t:fulltrace}
Let $t=T\in \mathcal{RT}(M,\mathcal{F})$ and suppose that the $Z^T_k$ satisfy Definition \ref{d:clean}. Then, in the above notation, near $t=T,$
\begin{equation}\label{e:sumfoliations}
Trace(U_B(t,x,y)\bigr)=\sum_{T\in \mathcal{RT}(M,\mathcal{F})}\nu_{T}(t),
\end{equation}
where $\nu_T\in I^{-1/4-e_T/2-m}(\mathbb{R},\Gamma^T,\mathbb{R})$ where $m=-(p+\kappa(T))/2.$  Furthermore, $\nu_{T}$ has an expansion of the form
\begin{equation}\label{e:expansion}
\nu_{T}(t)=\sum_{j=0}^\infty\sigma_j(T)(t-T+i0)^{-\frac{e_T-1}{2}-\frac{p+\kappa(T)}{2}-j}\,mod\,C^\infty(\mathbb{R}),
\end{equation}
and modulo Maslov factors. In the above, the coefficients $\sigma_j(T)$ contain contributions from stationary phase arguments and involve integrals over $S(Z^T_{\kappa(T)})$, and may also have contributions involving the fixed point sets $Z^T_k$  for $k>\kappa(T)$ when such sets are non-empty.

The leading term $\sigma_0(T)$ is
\begin{equation}\label{e:symbol}
e^{\frac{i\pi m_T}{4}}\Bigl[\int_{S(Z^T_{\kappa(T)})}\sigma(U)\,d\mu_{Z^T_{\kappa(T)}}\Bigr]\,\tau^{\frac{e_T-1}{2}-\frac{p+\kappa(T)}{2}}\sqrt{d\tau},
\end{equation}
where $m_T$ is the Maslov index of $Z^{T}_{\kappa(T)},$ and $\sigma(U)$ is the symbol of the Schwartz kernel of the wave operator, $e^{-it\Delta},$ where $\Delta$ is the ordinary laplacian acting on functions on $M.$  
\end{thm}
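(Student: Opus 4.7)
The plan is to realize $\text{Trace}(U_B)$ as a sum of Lagrangian distributions on $\mathbb{R}$ via the clean-intersection calculus of Fourier integral operators. First I would write $U_B = P \circ U$, where $U$ is the Schwartz kernel of $e^{-it\Delta}$ on $M$ and $P$ is the basic projector; the mean-curvature computation behind Lemma \ref{t:isospectrality} (equation \eqref{e:laps}) shows these compose in a way compatible with the bundle-like metric. The trace is then $\pi_* \Delta^* (P\circ U)$, where $\Delta$ embeds $\mathbb{R}\times M$ diagonally in $\mathbb{R}\times M\times M$ and $\pi:\mathbb{R}\times M\to\mathbb{R}$. The canonical relation of $U$ is the graph of geodesic flow, and the canonical relation of $P$ is encoded by the holonomy groupoid $\mathcal{G}^*(\overline{\mathcal{F}})$ from Remark \ref{r:holonomygpoid} acting on $(T\overline{\mathcal{F}})^0$. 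Composing these relations and imposing the diagonal condition forces exactly $\eta_y = dh^*_{\overline{\alpha}^{-1}}(\xi_x) = \Phi^T(\xi_x)$, which is Definition \ref{d:relclosed}.

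Second, I would localize in $t$ near a fixed $T\in\mathcal{RT}(M,\mathcal{F})$; by Theorem \ref{t:sojourntimes} these exhaust the singular support. By Proposition \ref{t:flowstrat}, each relatively closed arc of period $T$ is tangent to exactly one stratum $\Sigma^*_k$, giving the decomposition $Z^T = \bigcup_k Z^T_k$. The cleanliness hypothesis (Definition \ref{d:clean}), together with \eqref{e:cleanone}, is precisely the input needed to apply H\"ormander's clean composition theorem on each stratum: the differentials of $\Phi^T$ and of the infinitesimal holonomy $d\widetilde{h}_{(\overline{\alpha},\xi_x)}$ agree on $T_{\xi_x}Z^T_k$, yielding a clean intersection whose excess is determined by $e^T_k = \dim S(Z^{T,\ell}_k)$.

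Third, I would read off the order and the leading symbol using the clean composition formula. Clean composition drops the order by $e/2$ where $e$ is the excess, and the trace (pullback-pushforward through the diagonal) contributes the standard shift $-(p+\kappa(T))/2$ coming from the $p$-dimensional null foliation inside the stratum of dominant contribution; this produces $\nu_T \in I^{-1/4 - e_T/2 - m}(\mathbb{R},\Gamma^T,\mathbb{R})$ with $m = -(p+\kappa(T))/2$. The expansion \eqref{e:expansion} is then the standard asymptotic expansion of a Lagrangian distribution conormal to $\{t=T\}$ and supported on the negative ray $\Gamma^T$. The leading term comes from $Z^T_{\kappa(T)}$: by the stratification, lower-index strata yield the largest codimension of clean intersection and hence the most singular contribution. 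Integrating the principal symbol $\sigma(U)$ of $e^{-it\Delta}$ against the canonical density $d\mu_{Z^T_{\kappa(T)}}$ from Lemma \ref{l:density}, and tracking the Maslov phase $e^{i\pi m_T/4}$ exactly as in the Duistermaat--Guillemin calculation for closed geodesics, produces \eqref{e:symbol}.

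The main obstacle will be the non-smoothness of the singular phase space $(T\overline{\mathcal{F}})^0$: it is only a union of smooth strata $\Sigma^*_k$, so the composition of canonical relations is not a classical FIO composition. I would handle this stratum-by-stratum, using the groupoid structure of Proposition \ref{t:groupoidprop} to organize the holonomy action consistently, and then appeal to the foliation-theoretic machinery developed in \cite{San5} and \cite{San3} to assemble the stratum contributions into one globally-defined Lagrangian distribution. The most delicate points will be verifying that the finite intersections with $\Sigma^*_e$ described in Proposition \ref{t:flowstrat} contribute only to lower-order coefficients $\sigma_j$ with $j\ge 1$, and that the Maslov index behaves consistently under the stratified transverse flow so that the phase $e^{i\pi m_T/4}$ in \eqref{e:symbol} is well-defined.
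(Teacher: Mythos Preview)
The paper does not actually prove Theorem~\ref{t:fulltrace}. It appears in Section~\ref{summary}, which is explicitly a \emph{summary of related foliation results}: the theorem is quoted from the author's earlier paper \cite{San5} (just as Theorem~\ref{t:sojourntimes} is quoted from \cite{San3} and Lemma~\ref{l:density} is attributed to \cite{San5} in the proof of Proposition~\ref{t:diffeointegral}). In this paper Theorem~\ref{t:fulltrace} functions as an input, not as something established here; the work of Section~\ref{proofs} consists of translating it, via the suspension correspondence, into Theorem~\ref{t:fulltracegroup} for the $G$-manifold.

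Your proposal is therefore not comparable to any argument in the present paper. What you have sketched is a plausible outline of how the result is presumably proved in \cite{San5}: represent the basic wave trace as $\pi_*\Delta^*(P\circ U)$, identify the composed canonical relation with the relatively closed hamiltonian arcs via the holonomy groupoid $\mathcal{G}^*(\overline{\mathcal{F}})$, invoke the clean-intersection hypothesis stratum by stratum, and read off order, expansion, and leading symbol \`a la Duistermaat--Guillemin. You even flag the right subtleties (non-smoothness of $(T\overline{\mathcal{F}})^0$, finite intersections with $\Sigma^*_e$, Maslov consistency). This is reasonable as a roadmap, and you yourself point to \cite{San5} and \cite{San3} as the machinery that would assemble the stratum contributions. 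But since the paper at hand offers no proof to compare against, the honest assessment is simply that you have reconstructed a credible proof strategy for a theorem that the paper imports wholesale from elsewhere.
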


\subsection{Proof of the main results}
\label{proofs}

Proving the main results is a matter of translating the foliations result on $\mathcal{S}$ to the $G$-manifold $Y$ by means of the unique structure of the suspension. Accordingly, this section is organized as follows. We begin by demonstrating the correspondence between the stratified structures of $(T\overline{\mathcal{F}}_\mathcal{S})^0$ and $(TG)^0,$ in the remark below. We then demonstrate the correspondence between the transverse flow $\Phi^t$ on $(T\overline{\mathcal{F}}_S)^0$ and the flow $\Phi^t_Y$ on $(TG)^0\subset T^*Y,$ showing that Proposition \ref{t:flowstrat} implies Proposition \ref{t:hamdyn}. We then prove the correspondence between Definitions \ref{d:relclosedyg} and \ref{d:relclosed}, and finally show how Theorems \ref{t:sojourntimes} and \ref{t:fulltrace} and corresponding results of the previous section imply Theorems \ref{t:sojourntimesgroup} and \ref{t:fulltracegroup} and related results for the $G$-invariant trace.

\begin{rem}\label{r:manifoldstructure}
In what follows, we will frequently make use of the special manifold structure of the suspension. The essential fact about the suspension $\mathcal{S}$ is that its manifold structure is the unique one such that for every simply connected open set $U\subset X$, there is a diffeomorphism $\phi_U: \sigma(\widetilde{U}\times Y)\rightarrow U\times Y,$ where $\widetilde{U}$ is the inverse image of $U$ via the projection $\tilde{p}:\widetilde{X}\rightarrow X.$ Thus, by considering an open cover of simply connected open sets $\{U_\alpha\}_{\alpha\in \mathcal{A}}$ of $X,$ we form an open cover of $\mathcal{S}$ defined by $\{\sigma(\widetilde{U}_\alpha\times Y)\}_{\alpha\in \mathcal{A}}$ with respect to which the manifold structure of $\mathcal{S}$ factors through the manifold structure of $X\times Y$ via the local diffeomorphisms $\phi_{\alpha}$. The manifold structure $T^*\mathcal{S}$ factors through the manifold structure of $T^*(X\times Y),$ in a similar fashion: let $\{W_{\alpha,\beta}\}$ be an open cover of $T^*\mathcal{S}$ such that each $W_{\alpha,\beta}=\pi^{-1}(\sigma(\tilde{p}^{-1}(U_\alpha)\times V_{\beta}))$ where $\{U_{\alpha}\}$ is as above, and
$\{V_{\beta}\}$ is an atlas of $Y.$ Then $\{W_{\alpha,\beta}\}_{\alpha,\beta}$ is an atlas of $T^*\mathcal{S},$ and on each chart let  $\tilde{\phi}_{\alpha,\beta}$ denote the corresponding coordinate map. Then  $\tilde{\phi}_{\alpha,\beta}=\phi'_{\alpha,\beta}\circ\phi_{\alpha, \beta}$ where $\phi_{\alpha, \beta}:W_{\alpha,\beta}\rightarrow T^*(X\times Y)$ and $\phi'_{\alpha,\beta}$ is a coordinate map on the open set $W'_{\alpha,\beta}:=\phi_{\alpha, \beta}(W_{\alpha,\beta})$. Then $\{W'_{\alpha,\beta}\}_{\alpha,\beta}$ form an atlas of $T^*(X\times Y).$
\end{rem}

The proof of Proposition \ref{t:hamdyn} requires the following two lemmas.

\begin{lemma}\label{l:correspondence}
Let $\pi_2: X\times Y \rightarrow Y$ be the usual projection. There exists a local diffeomorphism $\phi: T^*\mathcal{S}\rightarrow  T^*(X\times Y)$ such that 
\begin{equation}\label{e:phasecorr}
\phi\bigl((T\overline{\mathcal{F}}_\mathcal{S})^0\bigr)=d\pi_2^*\bigl( (TG)^0\bigr),
\end{equation}
and for each $k$
\begin{equation}\label{e:stratcorr}
\phi(\Sigma^*_k)=d\pi_2^*(Y^*_k).
\end{equation}
\end{lemma}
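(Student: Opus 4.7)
The plan is to build $\phi$ chart by chart from the local product structure of the suspension described in Remark \ref{r:manifoldstructure}, and then verify the two identities by identifying leaf closures with products $U_\alpha \times \mathcal{O}_y$ in the local trivializations.

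First I would construct $\phi$. The atlas $\{\phi_{U_\alpha}: \sigma(\widetilde{U}_\alpha \times Y) \to U_\alpha \times Y\}$ on $\mathcal{S}$ lifts canonically to cotangent charts $\tilde{\phi}_{\alpha,\beta}: W_{\alpha,\beta} \to W'_{\alpha,\beta}$ on $T^*\mathcal{S}$. On each chart $W_{\alpha,\beta}$ this gives a diffeomorphism onto its image in $T^*(X\times Y)$; declaring $\phi|_{W_{\alpha,\beta}} := \tilde{\phi}_{\alpha,\beta}$ defines $\phi$ as a local diffeomorphism in the étale sense. No global compatibility of $\phi$ itself is required by the statement; however, I will need compatibility of the \emph{images} appearing in \eqref{e:phasecorr} and \eqref{e:stratcorr} on overlaps, which comes from the fact that the deck-transformation/$\varphi$-twist in \eqref{e:identification} acts on the $Y$-factor by an element of $G_0 \subset G$ and therefore preserves both $(TG)^0$ and each $Y^*_k$.

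Next I would establish \eqref{e:phasecorr}. Fix a chart and identify $\sigma(\widetilde{U}_\alpha \times Y)$ with $U_\alpha \times Y$ via $\phi_{U_\alpha}$. Under this identification, a leaf $L_{[(\tilde{x}_0,y_0)]}$ restricts to $U_\alpha \times \{y_0\}$ and its leaf closure to $U_\alpha \times \mathcal{O}_{y_0}$, by the description preceding \eqref{e:functionalident}. Consequently, at a point $(x,y)$, the tangent space to the leaf closure splits as $T_x U_\alpha \oplus T_y \mathcal{O}_{y} = T_x X \oplus (TG)_y$. Taking the annihilator in $T^*_{(x,y)}(X \times Y)$ produces exactly the covectors of the form $(0, \xi_y)$ with $\xi_y \in (TG)^0$, i.e.\ the image $d\pi_2^*\bigl((TG)^0\bigr)$ over that chart. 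Since $\phi$ is built out of these local trivializations, \eqref{e:phasecorr} holds.

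For \eqref{e:stratcorr}, I would invoke Lemma \ref{orbitstrat}, which asserts $\Sigma_{p+k} = (\widetilde{X} \times Y_k)/\sim$. In the chart above, $\Sigma_{p+k}$ is identified with $U_\alpha \times Y_k$, so its conormal bundle $(N\Sigma_{p+k})^0$ is identified with the covectors $(0,\xi_y)$ having $\xi_y \in (NY_k)^0$. Intersecting with $(T\overline{\mathcal{F}}_\mathcal{S})^0$ and applying \eqref{e:phasecorr} gives $\xi_y \in (NY_k)^0 \cap (TG)^0 = Y^*_k$ for each $k > 0$, which is \eqref{e:stratcorr} for these $k$. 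The case $k=0$ follows from the same identification applied to the complementary definition \eqref{e:sigmastarzero} and its $G$-manifold counterpart $Y^*_0$; in particular $\Sigma^*_e$ corresponds to $d\pi_2^*(Y^*_e)$.

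The main obstacle I anticipate is not the local computation but bookkeeping on overlaps: one must be sure that the twist by $\varphi([\gamma])$ appearing in \eqref{e:identification} does not shift the stratum $Y^*_k$ to a different stratum. This reduces to the observation that $\varphi([\gamma]) \in G_0 \subset G$ acts by isometries permuting $G$-orbits within a fixed orbit type, hence preserves the decomposition $Y = \bigcup_k Y_k$ and the lifted action preserves each $Y^*_k$. Once this invariance is in hand, the local identities patch together and the lemma is established.
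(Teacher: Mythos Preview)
Your proposal is correct and follows essentially the same approach as the paper: both exploit the local product structure of the suspension to identify leaf closures with $U_\alpha\times\mathcal{O}_y$ and strata $\Sigma_{p+k}$ with $U_\alpha\times Y_k$, then pass to annihilators. The only cosmetic differences are that the paper packages $\phi$ coordinate-free as $\phi=(d(\tilde{p}\times id_Y)^*)^{-1}\circ d\sigma^*$ through the universal cover rather than chart by chart, and it proves \eqref{e:stratcorr} first and deduces \eqref{e:phasecorr} from it, the reverse of your order.
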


\begin{proof}
We have the following natural surjective mappings: $\tilde{\pi}_2: \widetilde{X}\times Y \rightarrow Y,$ $\sigma: \widetilde{X}\times Y\rightarrow \mathcal{S},$ and $\tilde{p}\times id_Y:\widetilde{X}\times Y\rightarrow X\times Y$ where $id_Y$ is the identity on $Y$. Note that $\tilde{\pi}_2= \pi_2\circ (\tilde{p}\times id_Y).$ Thus the following maps are injective: $d\tilde{\pi}_2^*:  T^*Y\rightarrow T^*(\widetilde{X}\times Y),$ $d\sigma^*: T^*\mathcal{S}\rightarrow T^*(\widetilde{X}\times Y)$ and $d(\tilde{p}\times id_Y)^*:T^*(X\times Y)\rightarrow T^*(\widetilde{X}\times Y),$ and $d\pi_2^*:T^*Y\rightarrow T^*(X\times Y)$ with $d\tilde{\pi}_2^*= d(\tilde{p}\times id_Y)^*\circ d\pi_2^*.$ We define the injective mapping $\phi$ as follows:
\begin{equation}
\phi=\bigl(d(\tilde{p}\times id_Y)^*\bigr)^{-1}\circ d\sigma^*.
 \end{equation}
With respect to an open cover of $T^*\mathcal{S}$ of the form given in Remark \ref{r:manifoldstructure}, it is a local diffeomorphism.

Next, we claim that 
\begin{equation}\label{e:pitilde}
d\sigma^*(\Sigma^*_k)=d\tilde{\pi}_2^*(Y^*_k),
\end{equation}
and hence \eqref{e:stratcorr} follows via the relation $d\tilde{\pi}_2^*= d(\tilde{p}\times id_Y)^*\circ d\pi_2^*.$
Equation \eqref{e:pitilde} is easily proved via a standard containment argument. First suppose $k>0$. Let $q=(\tilde{x},y)\in \tilde{X}\times Y,$ and let $s=\sigma((\tilde{x}, y))$ and consider $\tilde{\xi}_q\in d\sigma^*(\Sigma^*_k)$. We observe that $\tilde{\xi}_q\in d\sigma^*(\Sigma_k^*)$ if and only if $\tilde{\xi}_q= d\sigma^*(\xi_s)$ where $\xi_s\in (T\overline{\mathcal{F}}_\mathcal{S})^0\cap (N\Sigma_{p+k})^0$. Hence,
\begin{eqnarray*}
\xi_s(v_s)&=&0 \, \forall v_s\in (T\overline{\mathcal{F}}_\mathcal{S})\nonumber\\
\xi_s(v_s)&=&0 \, \forall v_s\in N\Sigma_{p+k}=N(\sigma(\tilde{X}\times Y_k))\nonumber\\
\end{eqnarray*}
But
\begin{eqnarray*}
v_s\in (T\overline{\mathcal{F}}_\mathcal{S})&\iff& v_s=d\sigma(\tilde{v}_q) \text{ for } \tilde{v}_q\in T_{\tilde{x}}\tilde{X}\oplus T_y\mathcal{O}_y\label{e:tfzero2}\\
v_s\in N(\sigma(\tilde{X}\times Y_k))&\iff& v_s=d\sigma(\tilde{v}_q) \text{ for } \tilde{v}_q\in N(\tilde{X}\times Y_k).\label{e:normal2}
\end{eqnarray*}
Note that $N(\tilde{X}\times Y_k)\subset T(\tilde{X}\times Y)$ is just $\{0\}\oplus NY_k$. Thus, together, the above conditions imply that $d\sigma^*(\xi_s)$ annihilates vectors with components in $T(\tilde{X})$, $T_y\mathcal{O}_y$, or $N_yY_k$, which defines $d\tilde{\pi}^*(Y^*_k)$. The reverse conclusion is similar.
For $k=0$, the proof is similar and only slightly more complicated. Equation \eqref{e:phasecorr} follows from \eqref{e:stratcorr}.
\end{proof}

\begin{lemma}\label{l:inducedflow}
Let $\Phi^t$ denote the restriction of the transverse flow to $(T\overline{\mathcal{F}}_\mathcal{S})^0$ as in Proposition \ref{t:flowstrat}, and let $\Phi^t_Y$  be the flow restricted to $(TG)^0.$ Let $s=\sigma\bigl((\tilde{x},y)\bigr)$ as in the previous lemma. Then for $\eta_{s}\in (T\overline{\mathcal{F}}_{\mathcal{S}})^0$ and $\eta_y\in (TG)^0,$ such that $d\sigma^*(\eta_{s})=d\pi_2^*(\eta_y)$ we have
\begin{equation}\label{e:flowphi}
\phi\bigl(\Phi^t(\eta_{s})\bigr)=d\pi_2^*\bigl(\Phi^t_Y(\eta_y)\bigr).
\end{equation}
\end{lemma}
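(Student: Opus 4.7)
The plan is to reduce the claim to a local statement by exploiting the special manifold structure of the suspension described in Remark \ref{r:manifoldstructure}. On a simply connected open $U\subset X$, the diffeomorphism $\phi_U:\sigma(\widetilde{U}\times Y)\to U\times Y$ trivializes $\mathcal{S}$ as a product, and the bundle-like metric on $\mathcal{S}$ pulls back to the product metric $g_X+g_Y$. Working in such a chart, the Hamiltonian function $H(\xi_s)=|\xi_s|_s$ splits as
\begin{equation*}
H(\xi_s)^2=|\xi_X|^2_X+|\xi_Y|^2_Y,
\end{equation*}
so that the Hamiltonian vector field decouples as $\Xi_H=\Xi_{H_X}+\Xi_{H_Y}$ on $T^*(U\times Y)$.

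First I would observe that if $\eta_s\in (T\overline{\mathcal{F}}_\mathcal{S})^0$, then by definition $\eta_s$ annihilates every vector tangent to a leaf closure. Since the leaves themselves lift to $T\widetilde{X}$-directions, the pullback of $\eta_s$ to $T^*(\widetilde{X}\times Y)$ lies in the image of $d\tilde{\pi}_2^*$; equivalently, under $\phi$ we land in $d\pi_2^*(T^*Y)$. Moreover, by Lemma \ref{l:correspondence} the hypothesis $d\sigma^*(\eta_s)=d\pi_2^*(\eta_y)$ is consistent, and $\eta_y\in(TG)^0$. In particular, the $X$-component of $\phi(\eta_s)$ vanishes, so $H_X\equiv 0$ on the relevant image and only $\Xi_{H_Y}$ contributes to the flow.

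Next I would identify $\Xi_{H_Y}$ acting on the image $d\pi_2^*((TG)^0)$ with the Hamiltonian vector field $\Xi$ of $|\eta_y|_y$ on $(TG)^0$ itself. This is immediate from the product structure: $\pi_2$ intertwines the symplectic forms and the Hamiltonian functions, so the flow of $\Xi_{H_Y}$ on covectors of the form $d\pi_2^*(\eta_y)$ is exactly $d\pi_2^*\bigl(\Phi^t_Y(\eta_y)\bigr)$. By Proposition \ref{t:flowstrat} the curve $\Phi^t(\eta_s)$ remains in $(T\overline{\mathcal{F}}_\mathcal{S})^0$, and by Proposition \ref{t:hamdyn} (whose conclusion we may temporarily take for granted on the $Y$-side, as it is being derived precisely from this correspondence) the curve $\Phi^t_Y(\eta_y)$ remains in $(TG)^0$, so the identification persists for all $t$. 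Combining gives $\phi\bigl(\Phi^t(\eta_s)\bigr)=d\pi_2^*\bigl(\Phi^t_Y(\eta_y)\bigr)$, which is exactly \eqref{e:flowphi}.

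The main obstacle I anticipate is global coherence: a single trajectory $\Phi^t(\eta_s)$ need not stay inside one product chart $\phi_U$, so I must check that the local identification is invariant under the transition maps induced by the deck transformations in the identification \eqref{e:identification}. The key point here is that the product metric on $\widetilde{X}\times Y$ is preserved by the diagonal action, and the projection $\pi_2$ is $G_0$-equivariant (trivially in the $X$-factor), so the local Hamiltonian flow expressions glue together consistently on overlaps. Once this invariance under transitions is verified, the pointwise identity extends globally, completing the proof.
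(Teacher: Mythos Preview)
Your approach is essentially the same as the paper's: both arguments use the local product structure of the suspension to identify the transverse Hamiltonian $H_{\overline{\mathcal{F}}^\perp}$ on $(T\overline{\mathcal{F}}_\mathcal{S})^0$ with $|\eta_y|_y$ on $(TG)^0$, and then appeal to invariance under the deck-transformation/$G$-action to globalize. The paper's version is terser, reducing everything to the single identity $d\sigma^*\bigl(H_{\overline{\mathcal{F}}^\perp}(\eta_s)\bigr)=d\tilde{\pi}_2^*(|\eta_y|_y)$, but the content is the same.

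One logical point to clean up: your invocation of Proposition~\ref{t:hamdyn} to ensure $\Phi^t_Y(\eta_y)$ remains in $(TG)^0$ is circular, since that proposition is proved \emph{from} this lemma. You do not need it: once you know $\Phi^t(\eta_s)$ stays in $(T\overline{\mathcal{F}}_\mathcal{S})^0$ (Proposition~\ref{t:flowstrat}) and that $\phi$ carries $(T\overline{\mathcal{F}}_\mathcal{S})^0$ onto $d\pi_2^*\bigl((TG)^0\bigr)$ (Lemma~\ref{l:correspondence}), the persistence of $\Phi^t_Y(\eta_y)$ in $(TG)^0$ is an immediate consequence of the flow identification itself, not an input to it.
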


\begin{proof}
As in the previous lemma, equation \eqref{e:flowphi} follows from the following claim: 
\begin{equation}\label{e:flow}
d\sigma^*\bigl(\Phi^t(\eta_{s})\bigr)=d\tilde{\pi}_2^*\bigl(\Phi^t_Y(\eta_y)\bigr).
\end{equation}
Recall from the paragraph preceding Proposition \ref{t:flowstrat} that $H_{\overline{\mathcal{F}}^\perp}(\eta_{s})$ is the hamiltonian function associated to the flow $\Phi^t$ on $(T\overline{\mathcal{F}}_\mathcal{S})^0$ that is transverse to the leaf closures. Because the metric on $\mathcal{S}$ is locally the product metric, it follows that for $\eta_{s}\in(T\overline{\mathcal{F}}_\mathcal{S})^0,$ and $\eta_y\in (TG)^0,$ such that $d\sigma^*(\eta_{s})=d\tilde{\pi}_2^*(\eta_y),$ one has

\begin{equation}
d\sigma^*\bigl(H_{\overline{\mathcal{F}}^\perp}(\eta_{s})\bigr)=d\tilde{\pi}_2^*(|\eta_y|_y).
\end{equation}
Note that the expressions appearing on both sides of the equality above have the appropriate invariance under the action of $G$. 
\end{proof}

The proof of Proposition \ref{t:hamdyn} now follows easily:
\begin{proof}
Proof of Proposition \ref{t:hamdyn}. From Lemma \ref{l:correspondence}, $\phi$ carries the stratification of $(T\overline{\mathcal{F}}_\mathcal{S})^0,$ to the corresponding stratification of $(TG)^0.$ Lemma \ref{l:inducedflow} implies that the flows are the same, and thus must have the same behavior with respect to the stratification. The result then follows from Proposition \ref{t:flowstrat}.
\end{proof}

In order to prove Theorems \ref{t:sojourntimesgroup} and \ref{t:fulltracegroup}, we need to first establish the equivalence of Definitions \ref{d:relclosedyg} and \ref{d:relclosed} with the following proposition.

\begin{prop}\label{p:equivalencedef}
Suppose $s_1,\,s_2\in \mathcal{S}$ such that $s_1=\sigma\bigl((\tilde{x}_1, y_1)\bigr)$ and $s_2=\sigma\bigl((\tilde{x}_2, y_2)\bigr)$ where $y_2\in \mathcal{O}_{y_1}$ and suppose further that $\phi(\xi_{s_1})=d\pi_2^*(\xi_{y_1}),$ and, similarly, 
$\phi(\eta_{s_2})=d\pi_2^*(\xi_{y_2})$. Then $\xi_{s_1}$ and $\eta_{s_2}$ are endpoints of a transverse hamiltonian arc of length $T$ that is relatively closed with respect to the foliation $\overline{\mathcal{F}}_\mathcal{S}$ if and only if $\xi_{y_1}$ and $\eta_{y_2}$ are endpoints of a corresponding hamiltonian arc of length $T$ that is relatively closed in $(TG)^0\subset T^*Y$ with respect to the action.
\end{prop}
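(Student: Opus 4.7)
The plan is to reduce the equivalence to the identifications already established in Lemmas \ref{l:correspondence} and \ref{l:inducedflow}, leaving the key task of showing that the generalized holonomy action of $\mathcal{G}^*(\overline{\mathcal{F}}_\mathcal{S})$ on $(T\overline{\mathcal{F}}_\mathcal{S})^0$ corresponds, under $\phi$ and $d\pi_2^*$, to the lifted $G$-action $\widetilde{L}_g$ on $(TG)^0$. Once this correspondence is in hand, Lemma \ref{l:inducedflow} supplies the identification of the hamiltonian arcs and their lengths, and the two notions of relative closure then assert the same thing: that the endpoints of the arc lie in a common orbit of the corresponding group-like action on the singular phase space.

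For the forward direction, assume the arc is relatively closed with respect to $\overline{\mathcal{F}}_\mathcal{S}$, with $\overline{\alpha}$ a curve in $\overline{L}_{s_1}$ from $s_1$ to $s_2$ satisfying $dh^*_{\overline{\alpha}^{-1}}(\xi_{s_1}) = \eta_{s_2}$. Since $\sigma$ is a covering map, I lift $\overline{\alpha}$ to a curve $(\tilde{\alpha}(t),\beta(t))$ in $\widetilde{X}\times Y$ starting at $(\tilde{x}_1, y_1)$. Because $\overline{L}_{s_1}=\{[(\tilde{x},y)] : \tilde{x}\in\widetilde{X},\, y\in\mathcal{O}_{y_1}\}$, the lifted curve stays in $\widetilde{X}\times\mathcal{O}_{y_1}$, so $\beta(t)\in\mathcal{O}_{y_1}$ for all $t$. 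Since $(\tilde{\alpha}(1),\beta(1))\sim(\tilde{x}_2,y_2)$, there exists $[\gamma]\in\pi_1(X)$ with $y_2 = \varphi([\gamma])\cdot \beta(1)$, and writing $\beta(1) = h\cdot y_1$ for some $h\in G$ gives $y_2 = g\cdot y_1$ for $g = \varphi([\gamma])\cdot h \in G$. By the local product structure of Remark \ref{r:manifoldstructure}, the infinitesimal holonomy $dh^*_{\overline{\alpha}^{-1}}$ translates under the identifications $\phi$ and $d\pi_2^*$ to $dL^*_{g^{-1}}$ acting on $(TG)^0$, yielding $dL^*_{g^{-1}}(\xi_{y_1})=\eta_{y_2}$.

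For the converse, given $g\in G$ with $g\cdot y_1 = y_2$ and $dL^*_{g^{-1}}(\xi_{y_1})=\eta_{y_2}$, I construct a curve $\overline{\alpha}$ in $\overline{L}_{s_1}$ realizing $g$ as its generalized holonomy. Since $\overline{L}_{s_1}$ contains every point of the form $[(\tilde{x}, g'\cdot y_1)]$ for $g'\in G$, and is connected, one assembles a piecewise smooth curve from $s_1$ to $s_2$ by combining segments along the leaf $L_{s_1}$ (whose holonomy is identified with $\varphi(\pi_1(X))=G_0$ acting on a local transversal $\cong Y$) with segments along $\mathcal{O}_{y_1}$. The density of $G_0$ in $G$ ensures that every $g\in G$ is realized as the generalized holonomy of some such leaf closure curve, and the covector identity then follows from the same infinitesimal matching used in the forward direction.

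The principal obstacle is the precise matching of the generalized holonomy action of $\mathcal{G}^*(\overline{\mathcal{F}}_\mathcal{S})$ on transverse covectors with the lifted $G$-action on $(TG)^0$. This matching rests on carefully tracking how the local factorization $\sigma(\widetilde{U}\times Y)\cong U\times Y$ of Remark \ref{r:manifoldstructure} transports the $\pi_1(X)$-action through $\varphi$ (and its density closure) onto the $G$-action on $Y$, and on verifying that infinitesimal holonomy on transversals coincides with $dL^*_{g^{-1}}$ under the identifications of Lemma \ref{l:correspondence}. Everything else in the argument is formal consequence of this identification together with the already-established flow equivariance.
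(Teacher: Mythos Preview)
Your proposal is correct and follows essentially the same line as the paper's proof: both reduce the equivalence to Lemma~\ref{l:inducedflow} together with the claim that the generalized holonomy of $\mathcal{G}^*(\overline{\mathcal{F}}_\mathcal{S})$ on $(T\overline{\mathcal{F}}_\mathcal{S})^0$ is, under $\phi$ and $d\pi_2^*$, exactly the lifted $G$-action $\widetilde{L}_g$ on $(TG)^0$, and both then invoke the two definitions of relative closure. The one stylistic difference is that you lift $\overline{\alpha}$ globally through the covering map $\sigma:\widetilde{X}\times Y\to\mathcal{S}$ and read off the element $g=\varphi([\gamma])\cdot h$ from the endpoint identification, whereas the paper instead subdivides $\overline{\alpha}$ into pieces lying in charts $\sigma(\tilde{p}^{-1}(U)\times Y)$, observes that segments contained in such a chart have the trivial holonomy of a simple foliation, and that the remaining pieces contribute factors $\widetilde{L}_{\varphi([\gamma])}$ (for loops in $\pi_1(X)$, giving $G_0$) or $\widetilde{L}_h$ (for segments crossing between leaves within a leaf closure, giving elements of $G\setminus G_0$), whose product is the desired $\widetilde{L}_g$. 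Your lifting argument and the paper's chart-by-chart decomposition are equivalent bookkeeping for the same holonomy computation; either one fills in the ``principal obstacle'' you correctly flag.
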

\begin{proof}
The proof follows from Lemma \ref{l:inducedflow}, Definitions \ref{d:relclosedyg} and \ref{d:relclosed}, and the claim that the holonomy of the associated foliation and the holonomy of the leaf closures arises from the group action.

To justify this claim, first consider the holonomy of the leaves of the foliation of $\mathcal{S},$ rather than the leaf closures. 
Recalling the notation of Section 3.2 and Remark \ref{r:manifoldstructure}, suppose $\boldsymbol{\alpha}\in\mathcal{G}^*(\mathcal{F})|_{(T\overline{\mathcal{F}_\mathcal{S}})^0}$ is such that the corresponding homotopy class $[\alpha]$ can be represented by a curve that is contained in $\sigma(\tilde{p}^{-1}(U)\times Y)$, where $U$ is a simply connected subset of $X$. Then the holonomy action associated to $\boldsymbol{\alpha}$ is that of a simple foliation, and $\sigma(\tilde{p}^{-1}(U)\times Y)$ is diffeomorphic to $U\times Y\subset X\times Y.$ The holonomy action there is associated to the null-foliation of $d\pi_2^*\bigl((TG)^0\bigr)=(ker \,d\pi_2)^0\cap T^*X\times (TG)^0$ where
\begin{equation*}
(ker\, d\pi_2)^0=\{\xi\in T^*(X\times Y)\,|\, \xi_{(x,y)}(V_{(x,y)})=0\,\forall V_{(x,y)}\in (ker \,d\pi_2)\}.
\end{equation*}
This submanifold is also foliated by the canonical lift to $T^*(X\times Y)$ of $(ker\,d\pi_2)$, and the lifted foliation is simple. Thus, the holonomy action is that given by identifying vectors with the corresponding tangent vectors on the base. 
If $\boldsymbol{\alpha}$ is not of the type above, then it is the concatenation of curves as above and a curve that can be represented by some $[\gamma]\in \pi_1(X)$. Let $g=\phi([\gamma])$.  In this case, the corresponding holonomy action is that given by $dL^*_{g^{-1}}=:\widetilde{L}_g,$ $g\in G_0.$  

 
%
The holonomy of the leaf closures can similarly be realized by sliding along paths contained in the leaf closures, as we show in the following paragraph. The holonomy of elements $\boldsymbol{\overline{\alpha}}\in\mathcal{G}^*(\overline{\mathcal{F}})$ that can be represented by a curve that is contained in $\sigma\bigl(\tilde{p}^{-1}(U)\times Y\bigr)$ for simply connected $U$ as above have the holonomy of a simple foliation. The holonomy action of elements that are not of this type are the concatenation of curves of the previous types, and curves that travel across different leaves in the leaf closure.
Each leaf closure corresponds to an orbit $\mathcal{O}_{y_0}$ of $G$ on $Y$, with isotropy subgroup $K$, and the holonomy of a path $[\overline{\alpha}]$ with non-trivial holonomy corresponds to a group element which defines a nontrivial coset of $K/H.$ For leaf closures with non-trivial holonomy, the non-trivial holonomy action will be represented by $g\in G$ that represent non-trivial cosets of $G/K$ where, in the notation of Section 3.1 $K$ is the isotropy subgroup of the orbit corresponding to the leaf closure.
 
Consider a holonomy element $\boldsymbol{\overline{\alpha}}$ corresponding to a path that is not homotopic to a loop contained entirely in a single leaf (that case is already covered by ordinary leaf holonomy). Such a path can be decomposed into a finite concatenation of piecewise smooth paths $\boldsymbol{\overline{\alpha}^i}$ that are either wholly contained a leaf of the original foliation or are homotopic to paths that have endpoints in different leaves. For example, one may divide up the arc into pieces that are contained in sets of the form $\sigma(\widetilde{U}\times Y),$ as above.  We may assume that such paths can be locally decomposed into $(\overline{\alpha}_X^i,\overline{\alpha}_Y^i)$ where $\overline{\alpha}_X^i\subset X$ and $\overline{\alpha}_Y^i\subset Y.$ If the path is not homotopic to a loop in a leaf, then there must be at least two segments where the segment travels between leaves.  Let $(\overline{\alpha}_X^{i_0},\overline{\alpha}_Y^{i_0})$ be such a path.  We may further assume that $\overline{\alpha}_X^{i_0}$ is constant.  Then $\overline{\alpha}_Y^{i_0}(t_1)=h\cdot \overline{\alpha}_Y^{i_0}(t_0)$ for some $h\in G\setminus G_0$.  The holonomy map $h_{\boldsymbol{\overline{\alpha}^{i_0}}} $associated to this path segment is $\widetilde{L}_{h}$.  If the effect of travelling along the segments up to the $i_0$-th segment is $\widetilde{L}_{g'}$ for $g'\in G_0$, the the cumulative effect of travelling along the segments up through $i_0$ is $\widetilde{L}_{(g'h)}.$  The path may continue in the new leaf for some time, picking up some holonomy effects, before crossing back to the original leaf (or possibly on to other leaves), but the net effect of the holonomy will be the action by a product of elements in $G,$ not all of which will be in $G_0.$ Since the loop must return to the original leaf, the result of this product, say $g\in G$, determines the holonomy of the loop, will be some representative of $K/H,$ where $K$ is the isotropy subgroup of the orbit determined by the leaf closure, and $H$ is the principal isotropy group of the group action.

\end{proof}


%
\begin{proof}{Proof of Theorem \ref{t:sojourntimesgroup}.}

The result now follows easily from Proposition \ref{p:equivalencedef}, which implies that the relatively closed length spectrum of the foliation of $\mathcal{RT}(\mathcal{S},\,\mathcal{F}_\mathcal{S})$ is identical to the relatively closed length spectrum of the action of $G$ on $Y,$ $\mathcal{T}^G.$ The result then follows from Theorem \ref{t:sojourntimes}. \end{proof}
%

We now proceed with establishing the results leading to the proof of Theorem \ref{t:fulltracegroup}. First, we prove that the relative fixed point sets correspond to one another  and that the correspondence preserves the notions of clean-ness in Definitions \ref{d:cleangroup} and \ref{d:clean}. We then proceed with the proof of Theorem \ref{t:fulltracegroup}, which consists of showing that the basic wave invariants that appear in Theorem \ref{t:fulltrace} can be translated into invariants of the group action of $G$ on $Y.$


\begin{lemma}\label{t:fpsets}
In the notation previously established,
\begin{equation}
\phi(Z^T_k)=d\pi_2^*(F^T_k).
\end{equation}
Furthermore, the fixed point sets $F^T_k$ are clean in the sense of Definition \ref{d:cleangroup}  if and only if the sets $Z^T_k$ are clean in the sense of Definition \ref{d:clean}.
\end{lemma}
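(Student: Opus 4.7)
The plan is to deduce the identity $\phi(Z^T_k) = d\pi_2^*(F^T_k)$ directly from the correspondences already established, and then verify that the two cleanness conditions are transported to each other under $\phi$. Since the machinery of Lemmas \ref{l:correspondence} and \ref{l:inducedflow} and Proposition \ref{p:equivalencedef} already does the hard analytic work, the proof should proceed by straightforward unpacking of definitions.

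First I would establish the set-theoretic identity. A point $\xi_s$ lies in $Z^T_k$ if and only if it is the endpoint of a transverse hamiltonian arc of length $T$ in $(T\overline{\mathcal{F}}_\mathcal{S})^0$ that is tangent to $\Sigma^*_k$ and is relatively closed with respect to $\overline{\mathcal{F}}_\mathcal{S}$. By Lemma \ref{l:correspondence}, $\phi(\xi_s) = d\pi_2^*(\xi_y)$ for a unique $\xi_y \in Y^*_k$, and the stratum $\Sigma^*_k$ is carried to $d\pi_2^*(Y^*_k)$. By Lemma \ref{l:inducedflow}, the transverse hamiltonian flow on $(T\overline{\mathcal{F}}_\mathcal{S})^0$ is intertwined by $\phi$ with $d\pi_2^*$ applied to $\Phi^t_Y$, so tangency to a given stratum and the period $T$ are preserved. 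Finally, Proposition \ref{p:equivalencedef} asserts that relative closedness with respect to $\overline{\mathcal{F}}_\mathcal{S}$ at the level of $\mathcal{S}$ is equivalent to relative closedness with respect to $G$ at the level of $Y$. Combining these three facts gives $\xi_s \in Z^T_k$ iff $\xi_y \in F^T_k$, which is the desired identity.

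For the equivalence of cleanness, condition (1) of each definition transfers immediately: $\phi$ is a local diffeomorphism and $d\pi_2^*$ is an embedding, so $Z^T_k$ is a smooth submanifold of $(T\mathcal{F}_\mathcal{S})^0$ if and only if $F^T_k$ is a smooth submanifold of $T^*Y$. For condition (2), the essential input is the identification made in the proof of Proposition \ref{p:equivalencedef}, namely that the holonomy action $dh^*_{\overline{\alpha}^{-1}}$ on covectors transverse to a leaf closure coincides with the lifted group action $\widetilde{L}_g = dL^*_{g^{-1}}$ for the group element $g \in G$ that represents the holonomy of the loop $\overline{\alpha}$. Given endpoints $\xi_x \in Z^T_k$ and $\eta_y = \Phi^T(\xi_x) = (dh^{-1}_{\overline{\alpha}})^*\xi_x$, equation \eqref{e:cleanone} reads $d\Phi^T_{\xi_x} = d\widetilde{h}_{(\overline{\alpha},\xi_x)}$ on the normal distribution; pushing this through $\phi$ via Lemma \ref{l:inducedflow} yields $(d\Phi^T_Y)_{\xi_{y_2}}(T_{\xi_{y_2}} F^T_k) = d\widetilde{L}_g(T_{\xi_{y_2}} F^T_k)$, which is precisely the second clause of Definition \ref{d:cleangroup}. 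The reverse implication is immediate from the same identifications.

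The main obstacle is the tangent-level identification of infinitesimal holonomy with the lifted group action. Pointwise this is already in Proposition \ref{p:equivalencedef}, but to transport the cleanness condition one must verify that $d\phi$ intertwines $d\widetilde{h}_{(\overline{\alpha},\xi_x)}$ on $\widetilde{N_{\xi_x}\mathcal{F}_k}$ with $d\widetilde{L}_g$ on $T_{\xi_{y_2}} F^T_k$. This requires unpacking the product structure of the suspension described in Remark \ref{r:manifoldstructure}, tracking tangent vectors through $d\sigma$ and $d(\tilde{p}\times \mathrm{id}_Y)$, and invoking the fact that on chart neighborhoods $\sigma(\tilde{p}^{-1}(U)\times Y)$ the holonomy of paths projecting to loops $[\gamma] \in \pi_1(X)$ is by construction $\widetilde{L}_{\varphi([\gamma])}$. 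Once this intertwining is made precise on local charts, the transport of Definition \ref{d:clean} to Definition \ref{d:cleangroup} and back is a direct reading.
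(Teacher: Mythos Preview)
Your proposal is correct and follows essentially the same route as the paper's proof: the set-theoretic identity is deduced from Lemmas \ref{l:correspondence} and \ref{l:inducedflow} together with Proposition \ref{p:equivalencedef}, and the cleanness equivalence is obtained by recognizing that the infinitesimal holonomy $d\widetilde{h}_{(\overline{\alpha},\xi_x)}$ on the suspension is precisely $d\widetilde{L}_g$ for the appropriate $g\in G$, so that \eqref{e:cleanone} becomes the condition in Definition \ref{d:cleangroup}. The paper makes this last identification slightly more explicit by separating the cases $k=0$ (where $g\in G_0$) and $k>0$ (where $g\in G$), but the content is the same as what you outline in your final paragraph.
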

\begin{rem}\label{r:densesubgroup}
At first glance, it appears that $F^T_0$ depends on the holonomy given by $G_0$ rather than $G.$ However, this turns out not to be the case, due to the density of $G_0$ in $G.$ Let $F^T_0(H)$ denote the set of hamiltonian curves of length $T$ on $T^*Y$ which are relatively closed as above with respect to the action of a subgroup $H$ of $G.$ Then, $F^T_0(G)\subset F^T_0(G_0)$ since $G_0\subset G.$ The inclusion must in fact be an equality, since if we assume not, then there must be a $\xi_x\in F^T_0(G_0)$ that is not in $F^T_k(G).$ Thus, there exists some $g\in G\setminus G_0$ such that $dL_{g}^*\circ\Phi^T_Y(\xi_x)\not=\xi_x.$ However, since $G_0$ is dense in $G$, for any such $g\in G\setminus G_0$ it is possible to construct a sequence $\{g_k\}\subset G_0$ converging to $g$.  Since the composition $dL_{g}^*\circ\Phi^T_Y(\xi_x)$ is smooth, the sequence $\{dL_{g_k}^*\circ\Phi^T_Y(\xi_x)\}_k$ which is constant and equal to $\xi_x$ must converge to $dL_{g}^*\circ\Phi^T(\xi_x).$ Thus, $dL_{g}^*\circ\Phi^T_Y(\xi_x)=\xi_x$, and $F^T_0(G_0)=F^T_0(G).$ 

\end{rem}
\begin{proof}
The first part of the lemma follows from Lemma \ref{l:inducedflow} and the discussion of holonomy in Proposition \ref{p:equivalencedef}.
The second part of the lemma is due to the following observations concerning the action of holonomy, and from a comparision of Definitions \ref{d:clean} and Definition \ref{d:cleangroup}. Recall from Section 3.2 that the foliated submanifolds $(\Sigma^*_k,\widetilde{T{\mathcal{F}}_k})$ each have holonomy actions themselves.  First, recall that the left $G$-action lifts to $T^*\mathcal{S}$ in the usual way:  Let $s_i=\sigma\bigl((\tilde{x}_i,y_i) \bigr)$ for $i=1,\,2$ be as in Proposition \ref{p:equivalencedef}. Then $\forall g\in G$ we define the left action on $T^*\mathcal{S}$ by $\widetilde{L}_g(\xi_{s_1}):=dL_{g^{-1}}^*(\xi_{s_1}).$  For $k=0$ the holonomy induced by a loop $\alpha$ at $s_1=\sigma\bigl((\tilde{x}_1,y_1)\bigr)$ on $\widetilde{\mathcal{F}}$, denoted by $d\widetilde{h}_{(\alpha,\xi_{s_1})}$ is $d\widetilde{L}_g,$ for $g\in G_0.$  For any arc which is homotopic to one that is contained in $\sigma\bigl(\tilde{p}^{-1}(U)\times Y\bigr)$ for simply connected $U,$ the induced holonomy is just that given by identifying a vector in the tangent space at $\xi_{s_1}$ with the same vector in the tangent space at $\eta_{s_2}$. In a similar fashion, for $k>0$, the holonomy is given by $d\widetilde{L}_g,$ for $g\in G$, for loops, and via identifications along the fibre for curves with endpoints in the same plaque contained in $\sigma(\widetilde{U}\times Y)$ for simply connected $\tilde{p}(\widetilde{U})$.
For a suspension, the previous discussion of holonomy implies that equation \eqref{e:cleanone} can be reformulated as
\begin{equation}
d\Phi^T_{\xi_x}(\widetilde{N_{\xi_x}\mathcal{F}}_{k}|_{Z^T_k})=d\widetilde{L}_g(\widetilde{N_{\xi_x}\mathcal{F}}_{k}|_{Z^{T}_k})\quad g\in G \end{equation}
when the holonomy elements are non-trivial. The result then follows by comparing the two definitions.
\end{proof}

Next, we note that Lemma \ref{l:densitygroup} is an immediate corollary to the following proposition that will also be used subsequently in the proof of Theorem \ref{t:fulltracegroup}.

\begin{prop}\label{t:diffeointegral}
If $f$ is a function defined on $S(Z^T_k)$ then there exists densities $d\tilde{\mu}^T_k$ on $S(F^T_k)$ such that 
\begin{equation}\label{e:fixedk}
\int_{S(Z^T_k)} f\,d\mu_{Z^T_k}=\int_{S(F^T_k)}(\pi_2)_*\bigl((\phi^{-1})^*f)\bigr)\,d\tilde{\mu}^T_k,
\end{equation}
where $\pi_2:T^*(X\times Y)\rightarrow T^*Y.$ 
\end{prop}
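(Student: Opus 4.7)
The plan is to obtain the identity by a change-of-variables argument based on the identification of fixed point sets $\phi(Z^T_k)=d\pi_2^*(F^T_k)$ established in Lemma \ref{t:fpsets}, and to simply \emph{define} the density $d\tilde{\mu}^T_k$ on $S(F^T_k)$ as the pushforward of the basic density $d\mu_{Z^T_k}$ through this identification. Lemma \ref{l:density} already provides the smooth canonical density $d\mu_{Z^T_k}$ on $S(Z^T_k)$ in the foliation setting, so the work is to (a) construct a well-defined diffeomorphism $S(Z^T_k)\to S(F^T_k)$ from the maps $\phi$ and $d\pi_2^*$, and (b) check that under this diffeomorphism the integrand $f$ pulls back to the function $(\pi_2)_*\bigl((\phi^{-1})^*f\bigr)$ appearing on the right.

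First I would construct the identifying map. Since $d\pi_2^*:T^*Y\hookrightarrow T^*(X\times Y)$ is an embedding, it restricts to a diffeomorphism onto its image $d\pi_2^*(F^T_k)$. By Lemma \ref{t:fpsets}, $\phi(Z^T_k)=d\pi_2^*(F^T_k)$, so the composition
\begin{equation}
\Psi := (d\pi_2^*)^{-1}\circ\phi : Z^T_k \longrightarrow F^T_k
\end{equation}
is well-defined as a local diffeomorphism; since $\phi$ is fiber-linear and degree-one homogeneous, $\Psi$ restricts to a local diffeomorphism $S(Z^T_k)\to S(F^T_k)$. I would then define the density on $S(F^T_k)$ by
\begin{equation}
d\tilde{\mu}^T_k := \Psi_*\bigl(d\mu_{Z^T_k}\bigr),
\end{equation}
which immediately establishes Lemma \ref{l:densitygroup} as an existence corollary.

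With $d\tilde{\mu}^T_k$ defined this way, the classical change-of-variables formula for densities gives
\begin{equation}
\int_{S(Z^T_k)} f\,d\mu_{Z^T_k} = \int_{S(F^T_k)} (f\circ\Psi^{-1})\, d\tilde{\mu}^T_k,
\end{equation}
and it remains to identify $f\circ \Psi^{-1}$ with $(\pi_2)_*\bigl((\phi^{-1})^*f\bigr)$. Unpacking the notation, $(\phi^{-1})^*f=f\circ\phi^{-1}$ is a function on $d\pi_2^*(F^T_k)\subset T^*(X\times Y)$, and $(\pi_2)_*$ is the identification of functions on $d\pi_2^*(F^T_k)$ with functions on $F^T_k$ via the embedding $d\pi_2^*$, i.e.\ $(\pi_2)_* g = g\circ d\pi_2^*$. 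Hence $(\pi_2)_*(f\circ\phi^{-1})=f\circ\phi^{-1}\circ d\pi_2^*=f\circ\Psi^{-1}$, which is precisely what is needed.

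The main obstacle is ensuring that the composite $\Psi$ is well-defined globally and not merely as a local diffeomorphism, since $\phi$ itself was only asserted to be a local diffeomorphism in Lemma \ref{l:correspondence}; one must verify, using the explicit suspension structure in Remark \ref{r:manifoldstructure} together with the invariance property $F(\tilde{x},y)=F([\gamma^{-1}]\cdot\tilde{x},y\cdot\varphi([\gamma]))$, that distinct sheets of $\phi$ over $d\pi_2^*(F^T_k)$ differ only by the deck/holonomy action. Because the density $d\mu_{Z^T_k}$ is holonomy-invariant (this is essentially the content of its being canonical on the foliation side, cf.\ Lemma \ref{l:density}), the pushforward $d\tilde{\mu}^T_k$ does not depend on the choice of sheet, and the integral equality holds unambiguously over $S(F^T_k)$.
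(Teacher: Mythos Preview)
There is a genuine dimensional error in your construction of $\Psi$. You claim that $d\pi_2^*:T^*Y\hookrightarrow T^*(X\times Y)$ is an embedding, so that $(d\pi_2^*)^{-1}$ restricts to a diffeomorphism $d\pi_2^*(F^T_k)\to F^T_k$. But $\pi_2:X\times Y\to Y$ is a submersion, not an immersion: for each $\eta_y\in T^*_yY$ the pullback $d\pi_2^*(\eta_y)$ is the entire set $\{(x,y;0,\eta_y):x\in X\}$. Hence $d\pi_2^*(F^T_k)$ is not a copy of $F^T_k$ but a bundle over it with fiber $X$, and $\dim d\pi_2^*(F^T_k)=\dim X+\dim F^T_k=p+\dim F^T_k$. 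Correspondingly $\dim S(Z^T_k)=p+q^T_k$ while $\dim S(F^T_k)=q^T_k$ (this is exactly the dimension count used in the proof of Theorem~\ref{t:fulltracegroup}). Your map $\Psi$ is therefore a submersion with $p$-dimensional fibers, not a diffeomorphism, and $\Psi_*(d\mu_{Z^T_k})$ is not defined by a mere change of variables.

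What the paper does instead is precisely to integrate out these extra $p$ dimensions. The density $d\mu_{Z^T_k}$ is decomposed as $d\pi^*(\chi_{\mathcal{F}})\otimes d\nu^T_k\otimes d\mu'_{Z^T_k}$, where the first factor is the leafwise density coming from $dvol(X)$. After localizing via the atlas of Remark~\ref{r:manifoldstructure} and passing through $\phi$ to $T^*(X\times Y)$, one integrates over the $X$-fiber; the normalization $vol(X)=1$ makes this step invisible in the final formula, and what survives is the transverse piece, pushed forward by $\pi_2$ to give $d\tilde{\mu}^T_k$. Likewise $(\pi_2)_*$ applied to $(\phi^{-1})^*f$ is a genuine fiber integration, not the relabeling $g\mapsto g\circ d\pi_2^*$ that you wrote. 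Your argument would be salvageable if you replaced the putative diffeomorphism by this fiber-integration step and tracked the leafwise factor of the density through it.
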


\begin{proof} 
To understand how the density $d\tilde{\mu}^T_k$ is defined, first recall from the proof of Lemma \ref{l:density}, whose proof appears in \cite{San5}, the fact that the sets $Z^T_k$  (and, thus, the sets $S(Z^T_k)$ also) are equipped with canonical densities which are arise by combining a transverse density with a leaf-wise density, as follows. Let $T(Z^T_k)=\widetilde{T\mathcal{F}_k}\oplus \widetilde{N\mathcal{F}_k}$ denote the natural splitting of the tangent space of $T(Z^T_k).$ For any foliation of leaf dimension $\ell,$ we construct a leaf-wise density by recalling the characteristic form of a foliation $\chi_{\mathcal{F}_\ell}:$ let $X_1, \dots, X_\ell$ be in $T\mathcal{F}_\ell$, and let $\{E_j\}_{j=1}^n$ be an orthonormal frame of $\mathcal{S},$ of dimension $n.$  Then one defines a canonical $\ell$-form via the metric on $\mathcal{S}$ as follows:
\begin{equation}
\chi_{\mathcal{F}_\ell}(X_1,\dots,X_\ell)=det\bigl(g_{ij}(E_i,X_j)\bigr).
\end{equation}
Using this, we can define a canonical leaf-wise density on $\widetilde{T\mathcal{F}_\ell}$ by lifting it to $(T\mathcal{F}_\ell)^0,$ via $\pi:T^*\mathcal{S}\rightarrow \mathcal{S}.$ Thus, $|d\pi^*(\chi_{\mathcal{F}_\ell})|$ defines a positive leafwise density on  $(T\mathcal{F}_\ell)^0.$ Now, for $Z^T_k$, there are two foliations--the $p$-dimensional foliation, and the $(p+k)$-dimensional foliation associated to the leaf closures. Each of these foliations gives rise to two such densities. Let $|d\pi^*(\chi_{\mathcal{F}})|$ denote the $p$-dimensional leaf-wise density, and let $|d\pi^*(\chi_{\overline{\mathcal{F}}})|$ denote the $(p+k)$-dimensional density. Next, since the horizontal space, $\widetilde{N\mathcal{F}_k}$ of $((T\overline{\mathcal{F}_{\mathcal{S}}})^0|_{Z^T_k},\,\widetilde{T\mathcal{F}}_k)$ is a symplectic space, and $\Phi^T$ and $d\widetilde{h}_{\boldsymbol{\overline{\alpha}}}$ are symplectic diffeomorphisms of $\widetilde{N\mathcal{F}_k},$ one can use section 4 of \cite{DG} to construct canonical densities on $\widetilde{N\mathcal{F}_k}.$ Let $d\mu'_{Z^T_k}$ denote the transverse density arising from the $(p+k)$-dimensional foliation. By definition of $(T\overline{\mathcal{F}_\mathcal{S}})^0,$ this transverse density depends only on the action of $G$ on $Y.$ 
One then constructs the full density on $Z^T_k$ as follows:
\begin{equation}\label{e:densityonZ}
d\mu_{Z^T_k}=d\pi^*(\chi_{\overline{\mathcal{F}}})\otimes d\mu'_{Z^T_k}.
\end{equation}
Here we use the fact that $|\widetilde{T\mathcal{F}_k}|\otimes|\widetilde{N\mathcal{F}_k}|\cong|T(Z^T_k)|.$ We can further decompose the leaf-wise measure for the $(p+k)$-dimensional leaf closures of $Z^T_k$ as follows:
\begin{equation}
d\mu_{Z^T_k}=d\pi^*(\chi_{\mathcal{F}})\otimes d\nu^T_k\otimes d\mu'_{Z^T_k}.
\end{equation}
where
\begin{equation}
|d\nu^T_k|=|d\pi^*(\chi_{\mathcal{F}})|^{-1}\otimes |d\pi^*(\chi_{\overline{\mathcal{F}}})|.
\end{equation}
%
%

Now consider the left-hand-side of \eqref{e:fixedk}. Essentially, we proceed by localizing this integral, making a change of variables via the local diffeomorphism to the product, and integrate over the first factor in $T^*X\times T^*Y$ (which is isomorphic to $T^*(X\times Y)$). The density $d\tilde{\mu}^T_k$ arises from pushing forward via $\pi_2$ the density $d\nu^T_k\otimes d\mu'_{Z^T_k}.$

More precisely, the integral in \eqref{e:fixedk} can be localized via a partition of unity subordinate to an atlas of the form described in Remark \ref{r:manifoldstructure} localized to $S(Z^T_k)$ as follows. As in that remark, let $\{W_{\alpha,\beta}\}_{\alpha,\beta}$ denote such an atlas of $S(Z^T_k)$, and let $\{\psi_{\alpha,\beta}\}_{\alpha,\beta}$ denote a partition of unity on $S(Z^T_k)$ that is subordinate to that atlas. Furthermore, observe that $\phi$ restricted to each $W_{\alpha,\beta}$ gives us the $\phi_{\alpha, \beta}$ of Remark \ref{r:manifoldstructure}. Then, 
\begin{equation}
\int_{S(Z^T_k)} f\,d\mu_{Z^T_k}=\sum_{\alpha,\beta}\psi_{\alpha,\beta}\int_{W_{\alpha, \beta}} f \, d\mu_{Z^T_k}.
\end{equation}
Each integral on the right-hand-side of the above can be rewritten via the local diffeomorphism $\phi$ as follows:
\begin{equation}
\int_{W_{\alpha, \beta}} f\, d\mu_{Z^T_k}=\int_{\phi(W_{\alpha, \beta})}\bigl((\phi^{-1}_{\alpha, \beta})^* f\bigr)\, (\phi^{-1}_{\alpha, \beta})^*(d\mu_{Z^T_k})
\end{equation}
Let $(s,\zeta)$ be local coordinates on $SZ^T_k$ such that $\phi_{\alpha, \beta}(s,\zeta)=(x,y,\xi, \eta)$ are local coordinates on $\phi_{\alpha, \beta}(W_{\alpha, \beta}).$ Note that by Lemma \ref{t:fpsets}, $\{\phi_{\alpha, \beta}(W_{\alpha, \beta})\}_{\alpha,\beta}$ restricts to an atlas of $d\pi_2^*(SF^T_k)$. Thus, making a change of coordinates the right side of \eqref{e:fixedk}, can we written as
\begin{equation}\label{e:localization}
\sum_{\alpha,\beta}\psi_{\alpha,\beta}\circ\phi^{-1}_{\alpha,\beta}\int_{\phi_{\alpha, \beta}(W_{\alpha, \beta})}\bigl((\phi^{-1}_{\alpha, \beta})^* f\bigr)\,(\phi^{-1}_{\alpha,\beta})^*(d\mu_{Z^T_k}).
\end{equation}
Note that $\{\psi_{\alpha,\beta}\circ\phi^{-1}_{\alpha,\beta}\}$ is a partition of unity on $d\pi_2^*(SF^T_k)$  subordinate to the atlas $\{\phi_{\alpha, \beta}(W_{\alpha, \beta})\}_{\alpha,\beta}$. 

Consider next the local densities in \eqref{e:localization}. From \eqref{e:densityonZ}, we have 
\begin{equation}
d\mu_{Z^T_k}=d\pi^*(\chi_\mathcal{F})\otimes d\nu^T_k\otimes d\mu'_{Z^T_k}.
\end{equation}
In the local coordinates defined above, $\chi_{\mathcal{F}_\mathcal{S}}=d\pi^*d\pi_1^*dvol(X).$  Thus, under the local diffeomorphisms $\phi_{\alpha,\beta}$ 
\begin{equation}\label{e:locdens}
(\phi^{-1}_{\alpha,\beta})^*(d\mu_{Z^T_k})=d\pi^*d\pi_1^*dvol(X)\otimes d(\tilde{\nu}^T_k)_{\alpha,\beta}.
\end{equation}
In the above, $(d\tilde{\nu}^T_k)_{\alpha,\beta}$ is a local density defined by the diffeomorphisms $\phi_{\alpha,\beta}$, which can be regarded as a density on the conormal bundle to each orbit arising from $d\nu^T_k\otimes d\mu'_{Z^T_k}.$
Summing up over first $\alpha$ and then $\beta$ in the integral in \eqref{e:localization}, we get a global density on $d\pi_2^*(S(F^T_k))$ of the form, $d\pi^*d\pi_1^*dvol(X)\otimes d\tilde{\nu}^T_k.$

Putting \eqref{e:locdens} into \eqref{e:localization}, and integrating over $T^*X$ gives the result via the fact that $vol(X)=1$. This yields an integral over $S(F^T_k)$ with density defined by the local expressions above; we denote this density by $d\tilde{\mu}^T_k,$ which we note is just the push-forward of $d\tilde{\nu}^T_k$ by $\pi_2.$ With this observation the integrand is as given in \eqref{e:fixedk}.

\end{proof}

\begin{proof}{Proof of Theorem \ref{t:fulltracegroup}. }
The method of proof is a generalization of the methods used to prove the partial wave trace formula in \cite{San3}.
First note that the hypotheses of Theorem \ref{t:fulltrace} which require the clean-ness of the relatively closed curves of the associated foliation are equivalent to the clean-ness hypothesis of Theorem \ref{t:fulltracegroup}, by Lemma \ref{t:fpsets}. Then, from Theorem \ref{t:fulltrace}, and \cite{Ri2}, Lemma \ref{t:isospectrality}, the $G$-invariant spectrum of $Y$ admits, under the given hypotheses, a wave trace formula of the form \eqref{e:pwt1}, where each $\nu_T\in I^k$ with 
\begin{equation}\label{e:order}
k=-1/4-\frac{e_T}{2}+(p+\kappa(T))/2. 
\end{equation}
Specializing to the foliation on the suspension $\mathcal{S}$ yields
\begin{equation}\label{e:order2}
k=-1/4-\frac{max_j\, dim\, S(Z^T_{\kappa(T),j})}{2}+\frac{dim(\widetilde{X})+\kappa(T)}{2}. 
\end{equation}
Here $max_j\,\,dim (Z^T_{\kappa(T),j})$ is the dimension of the largest dimensional component of $S(Z^T_{\kappa(T)}),$ over the finite number of such components, and $p+\kappa(T)$ is the dimension of the leaf closures in the stratum of the associated foliation (with $p=dim(\widetilde{X})$).

Observe from the above that the codimension of the foliation of $(S(Z^T_k),\,\widetilde{\mathcal{F}})$ is equal to the dimension of $S(F^T_k)=q^T_k.$ But $S(Z^T_{\kappa(T)})$ is foliated by the $p$-dimensional leaves of $\widetilde{\mathcal{F}}$, and thus $max\,\,dim (S(Z^T_{\kappa(T)}))=p+q^T_{\kappa(T)},$ by above observation. Furthermore, for the associated foliation, $\kappa(T)=d(T).$  Thus, the $p$ terms cancel, and \eqref{e:pwt1} follows immediately from \eqref{e:sumfoliations}.  

It remains to show that the the coefficients in the expansion \eqref{e:expansion} can be expressed in terms of the $G$-manifold $Y$. But this follows from Proposition \ref{t:diffeointegral}, which allows the translation of the coefficients of the trace formula in 
Theorem \ref{t:fulltrace}, and the fact that $X$ has unit volume. In particular, consider the leading order term in the expansion. To avoid confusion, let $\sigma(U_\mathcal{S})$ denote the symbol of the Schwartz kernel of the wave operator, $U_\mathcal{S},$ acting on functions on $\mathcal{S}$ whose canonical relation is denoted $\Lambda^\mathcal{S},$ and similarly let 
$\sigma(U_Y)$ denote the symbol of the Schwartz kernel of the wave operator, $U_Y,$ acting on functions of $Y,$ with corresponding canonical relation $\Lambda^Y.$ Recall from the proof of the ordinary trace formula in \cite{DG} that for the Laplacian (which has subprincipal symbol equal to zero), the symbol of the Schwartz kernel of $U_\mathcal{S}$ is associated to the half density
\begin{equation}
pr^*_\mathcal{S}\bigl( |dt|^{1/2} \otimes |dz_1\wedge d\zeta_1|^{1/2}\bigr)
\end{equation}
where $$pr_\mathcal{S}:\Lambda^\mathcal{S}\rightarrow T^*(\mathbb{R}\times\mathcal{S})\setminus \{0\}$$ is the projection given by
$pr_\mathcal{S}(t,\tau,s_1,\zeta_1,s_2,\zeta_2)=(t,s_1,\zeta_1).$
But via the local diffeomorphism $\phi$ induces a local diffeomorphism $\tilde{\phi}: T^*(\mathbb{R}\times\mathcal{S})\rightarrow T^*(X\times Y\times \mathbb{R})$ in the obvious way. Under $\tilde{\phi}$ the density $ |dt|^{1/2} \otimes |dz_1\wedge d\zeta_1|^{1/2}$ becomes, 
\begin{equation}
|dt|^{1/2}\otimes |dx_1\wedge dy_1\wedge d\xi_1\wedge d\eta_1|^{1/2}
\end{equation}
where $(x_1,y_1,\xi_1,\eta_1)$ are coordinates on $T^*(X\times Y).$ (Here, we are tacitly making use of the isomorphism between $T^*(X\times Y)$ and $T^*X\times T^*Y.$) When this function is restricted to $S(Z^T_k)$ and pushed forward and via $(\pi_2)_*$, it becomes the half density associated to $\sigma(U_Y):$
\begin{equation}
pr_Y^*\big(|dt |^{1/2}\otimes |dy_1\wedge d\eta_1|^{1/2}\bigr),
\end{equation}
where $pr_Y:\Lambda^Y\rightarrow T^*(\mathbb{R}\times Y)\setminus \{0\}$ is the projection given by
$$pr_Y(t,\tau,y_1,\eta_1,y_2,\eta_2)=(t,y_1,\eta_1),$$
proving \eqref{e:symbolgroup} and completing the proof.

\end{proof}


\begin{thebibliography}{}
%
%
\bibitem{AlbMel} P. Albin, R. Melrose, Resolution of Smooth Group Actions, Contemp. Math., 535, 1--26 (2011).
\bibitem{Bre}  G. E. Bredon, Introduction to Compact Transformation Groups, Academic Press, New York (1972).
\bibitem{DG} J. Duistermaat, V.  Guillemin, The Spectrum of Positive Elliptic Operators and Periodic Bicharacteristics,  Invent. Math., 29,39--79  (1975).
\bibitem{GS} V. Guillemin, S. Sternberg, Homogeneous Quantization and Multiplicities of Group Representations. J. Funct. Anal., 47, 344--380 (1982).
\bibitem{LS} E. Lerman, R. Sjamaar, Stratified Symplectic Spaces and Reduction, Ann. Math., 134, 375--422 (1991).
\bibitem{Molino} P. Molino, Riemannian Foliations, Birkhauser, Boston (1988).
\bibitem{PaRi} E. Park, K. Richardson, The Basic Laplacian of a Riemannian Foliation, Am. J. Math., 118, 1249--1275 (1996).
\bibitem{KR2} K. Richardson, The Asymptotics of Heat Kernels on Riemannian Foliations, Geom. Funct. Anal., 8, 356--401 (1998).
\bibitem{Ri2} K. Richardson, The Transverse Geometry of $G$-Manifolds and Riemannian Foliations, Illinois J. Math., 45, 517--535 (2001).
\bibitem{San3} M. R. Sandoval, The Singularities of the Wave Trace of the Basic Laplacian of a Riemannian Foliation, J. Funct. 
Anal., 243, 1--27 (2007).
\bibitem{San4} M. R. Sandoval, Wave Invariants of the Spectrum of the $G$-Invariant Laplacian and the Basic Laplacian of a Riemannian Foliation, Comm. Part. Diff. Eq., 33, 1818--1846  (2008).
\bibitem{San5} M. R. Sandoval, The Wave Trace of the Basic Laplacian of a Riemannian Foliation at a Non-zero Period, Ann. Glob. Anal. Geom., 44, Issue 2, 217-244 (2013), DOI:10.1007/s10455-012-9363-8.
\bibitem{W}  H. E. Winkelnkemper, The Graph of a Foliation, Ann. Global An. Geom., 1, 51--75 (1983).
\bibitem{Z} S. Zelditch, Kuznecov Sum Formulae and the Sz\"{e}go Limit Formulae on Manifolds, Comm. Part. Diff. Eq., 17 221-260 (1992).

\end{thebibliography}
\end{document}